\newtheorem{dfn}{Definition}[subsection]
\newtheorem{pp}[dfn]{Proposition}
\newtheorem{col}[dfn]{Corollary}
\newtheorem{lem}[dfn]{Lemma}
\newtheorem{thrm}[dfn]{Theorem}
\def\sp{\operatorname{Span}}
\def\stb{\operatorname{Stab}}
\newcommand{\1}{\left}
\newcommand{\2}{\right}
\newcommand{\calo}{\mathcal{O}}
\newcommand{\calp}{\mathcal{P}}
\newcommand{\calb}{\mathcal{B}}
\newcommand{\bbc}{\mathbb{C}}
\newcommand{\bbz}{\mathbb{Z}}
\newcommand{\bbg}{\mathbb{G}}
\newcommand{\frako}{\mathfrak{o}}
\newcommand{\frakp}{\mathfrak{p}}
\newcommand{\fraks}{\mathfrak{S}}
\newcommand{\fraka}{\mathfrak{A}}
\newcommand{\al}{\alpha}
\newcommand{\ep}{\varepsilon}
\newcommand{\wh}{\widehat}
\newcommand{\rh}{\wh{R}}
\newcommand{\bs}{\backslash}
\newcommand{\ol}{\overline}
\newcommand{\vp}{V_\calp}
\newcommand{\op}{\omega_\calp}
\newcommand{\ox}{\frako^\times}
\newcommand{\rl}{_{r=0}^{\ell-1}}
\newcommand{\kn}{_{k=1}^N}
\newcommand{\iir}{_{i=1}^{I_r}}
\newcommand{\uel}{^{\langle \ell \rangle}}
\newcommand{\uei}{^{(i)}}
\newcommand{\ueo}{^{(1)}}
\newcommand{\pri}{\varphi_r\uei}
\newcommand{\prj}{\varphi_r^{(j)}}
\newcommand{\pro}{\varphi_r\ueo}
\newcommand{\la}{\langle}
\newcommand{\ra}{\rangle}
\newcommand{\ten}{\;\cdot\;}
\newcommand{\grsi}{\gamma_{r,s}^{(i)}}
\newcommand{\grri}{\gamma_{r,r}^{(i)}}
\newcommand{\grro}{\gamma_{r,r}^{(1)}}
\newcommand{\grzi}{\gamma_{r,0}^{(i)}}
\newcommand{\gsso}{\gamma_{s,s}^{(1)}}
\newcommand{\gzzo}{\gamma_{0,0}^{(1)}}
\newcommand{\xln}{X(\ell,N)}
\newcommand{\tf}{\otimes
 \hspace{-0.16in} {}_{{}_{{}_{{}_{k=1}}}}
 \hspace{-0.16in} {}^{{}^{{}^N}} \;f_k}
\newcommand{\bt}{\bigotimes}
\newcommand{\aia}{\al\in\fraka(n)}
\newcommand{\ari}{A_r\uei}
\newcommand{\iwa}{_{i=I_{{}_{r-1}}+1}^{I_r}}
\newcommand{\ao}{\al\ueo}
\newcommand{\ai}{\al\uei}
\newcommand{\beq}{\begin{equation}}
\newcommand{\eeq}{\end{equation}}
\newcommand{\ba}{\begin{array}}
\newcommand{\ea}{\end{array}}
\newcommand{\nt}{\noindent}
\newcommand{\np}{\newpage}
\newcommand{\vs}{\vspace}
\newcommand{\hs}{\hspace}
\newcommand{\dis}{\displaystyle}
\title{\bf Harmonic analysis on a local field\\ towards addition theorems for\\ multivariate Krawtchouk polynomials}
\author{Koei KAWAMURA}
\date{}
\begin{document}

\maketitle

\begin{abstract}
We aim addition theorems for multivariate Krawtchouk polynomials, following Dunkl\cite{Dun} for 1-variate case.
We work on harmonic analysis on a non-Archimedean local field, that is a group theoretic situation where these polynomials play roles of the zonal spherical functions.
Unlike Dunkl's case, we use decompositions of spherical representations as not necessarily irreducible.
We examine translations of zonal spherical functions, and have a kind of addition theorem for multivariate Krawtchouk polynomials.
\end{abstract}

\section{Introduction}\label{intro}

\nt
Harmonic analysis on groups is one of the most suitable framework to investigate special functions.
An addition theorem for certain orthogonal polynomials may correspond to the expansion of translation of the zonal spherical functions as Fourier series over a subgroup, thus important in order to grasp structure of the homogeneous space and group theoretic meanings of the special functions.
Many cases are studied in this direction since the 1970's.
As for discrete type (orthogonal with respect to a summation), we can see addition theorems for Krawtchouk polynomials by Dunkl\cite{Dun}, Hahn polynomials by Dunkl\cite{Dun2} and $q$-Krawtchouk polynomials by Stanton\cite{Sta}.
We remark that all of these examples are of 1-variate cases.

On the other hand, as seen in recent works, multivariate orthogonal polynomials have also been interpreted in harmonic analysis.
For instance, Mizukawa\cite{Miz} expressed zonal spherical functions on the complex reflection groups in terms of multivariate Krawtchouk polynomials.
Scarabotti\cite{Sca} regarded multivariate Hahn polynomials as intertwining functions (a generalization of zonal spherical functions) on the symmetric groups.
So it will be expected that the application of methods for 1-variate cases may induce addition theorems for multivariate cases, although I couldn't find works in such direction.

In fact, irreducible decompositions are complicated and difficult to parametrize in multivariate cases.
Nevertheless, in spite of not necessarily irreducible, zonal spherical functions can be decomposed as impressive ways. 
In this article, we propose such a situation in the harmonic analysis on a non-Archimedean local field, on which the author himself\cite{ore} induced the zonal spherical functions in terms of the multivariate Krawtchouk polynomials.
And we induce an addition theorem for that polynomials with respect to one kind of decompositions.

\vspace{.1in}
\noindent
\underline{\bf Notations.}\;\; We use notations below throughout the paper, most of which may be standard.
Both notations $|A|$ and $\sharp A$ stand for the cardinarity of a set $A$.
For a finite sequence of numbers $z=\1( z_s\2)_{s \in S}$ indexed by $S\subset \bbz$ and an integer $k\in \bbz$, we use notations
\begin{equation}
 \label{|}
|z|=\sum_{s\in S}z_s,\quad z|^k=\sum_{s\leq k}z_s,\quad z|_k=\sum_{s\geq k} z_s
\end{equation}
(the later two may not be common).
In most cases they are used for $z=(z_r)_{r=0}^{\ell-1}$, then $z|^k=z_0+z_1+\cdots +z_k,\; z|_k=z_k+z_{k+1}+\cdots+z_{\ell-1}$.

When a group $G$ acts on a set $X$, and $x, y \in X$ are in a same $G$-orbit, then we write $x\sim y\; (G)$.

\section{Zonal spherical functions on a non-Archimedean local fields}

Here we summarize well-known statements, and results by the author\cite{ore}.

\subsection{Zonal spherical functions on finite abelian groups}

We consider a situation that a compact group $G$ acts on a finite abelian (additive) group $A$, and denote $\bbg=G\rtimes A$ the semi-derect group of this action.
Then $\bbg$ acts on $A$ by
\beq
\bbg \curvearrowright A,\quad (b,g)\cdot a=b+g(a)\qquad(a,b \in A,\;g\in G).
\eeq
The permutation representation with respect to this action is defined on the space $V=\bbc[A]$ of all complex valued functions on $A$.
We note that it is unitary according to the $L^2$-inner product of $V$;
\beq
  \label{nai}
\langle \varphi, \psi  \rangle=\frac1{|A|} \sum_{a\in A}\varphi(a) \overline{\psi(a)}\qquad(\varphi, \psi \in V).
\eeq
The representation $V$ is decomposed into irreducible subrepresentations with multiplicity free,
and each irreducible component involves exact one dimensional $G$-invariant subspace.
It is one aspect of the fact that the pair $(\bbg, G)$ is a Gelfand pair\cite{Ter}.

Let us explain this decomposition in more detail.
The character group $\wh{A}$ of $A$ allows the contragredient action of $G$.
We denote the set of all orbits of it by $G\bs \wh{A}$.
For each $\calp \in G\bs\wh{A}$, we put $V_\calp := \sp \calp \subset V=\bbc[A]$.
Then mutually orthogonal irreducible decomposition of $V$ is given by
\beq
   \label{decom0}
V=\bigoplus_{\calp \in G\bs\wh{A}} \vp.
\eeq
We call $V_\calp$ the spherical representation of the pair $(\bbg, G)$ associated with the orbit $\calp \in G\bs\wh{A}$.
It has one dimensional $G$-invariant subspace ${V_\calp}^G$ as mensioned above (by Frobenius reciprocity).
The generating element $\op$ of ${\vp}^G$ is called the zonal spherical function; we normarize as $\op(0)=1$.
We may summarize the setting so far as follows:
$$
\bbg=A\rtimes G \curvearrowright V=\bbc[A]=\bigoplus_{\calp \in G\bs\wh{A}} \vp \supset V^G= \bigoplus_{\calp \in G\bs\wh{A}}{\vp}^G=\bigoplus_{\calp \in G\bs\wh{A}}\bbc\op.
$$

\subsection{Representations of wreath products on a non-Archimedean local field}

Let $F$ be a non-Archimedean local field, $v:F\to \bbz\cup \{\infty\}$ be the discrete valuation, $\frako=\{ a\in F \mid v(a)\geq 0\}$ be the ring of integers of $F$, and $\frakp=\{a\in F\mid v(a)\geq 1\}$ be the maximal ideal of $\frako$.
We fix a generator of $\frakp$ denoted by $\pi$.
The residue field $\frako / \frakp$ has the finite order $q$, a power of a prime.
We fix a natural number $\ell\geq1$ and denote by $R:= \frako / \frakp^\ell$ the residue ring of the order $q^\ell$.
The valuation $v:R\to \{0,\cdots,\ell-1\} \cup \{\infty\}$ is
naturally induced.
We concider a wreath product group $G=(\ox)^N \rtimes \fraks_N$, where $\ox$ is the multiplicative group of $\frako$ and $\fraks_N$ is the symmetric group of order $N\geq 1$.
Then $G$ acts on the additive group $A=R^N$ by
\beq
  \label{w-act}
(c,\sigma) a = \1(c_k a_{\sigma^{-1}(k)}\2)_{k=1}^N
\qquad
\1(c=\1(c_k\2)_{k=1}^N \in (\ox)^N,\; \sigma \in \fraks_N,\; a=\1(a_k\2)_{k=1}^N \in R^N
\2).
\eeq 
We take a parameter set
\beq
X(\ell,N)=\1\{x=\1(x_r\2)_{r=0}^{\ell-1}\in\1(\bbz_{\geq 0}\2)^\ell \;\Big|\; |x|\leq N \2\}.
\eeq
Then for $x=\1(x_r\2)\rl\in X(\ell, N)$,
\beq
\calo(x)=\1\{
\1(a_k\2)\kn \in R^N\;\Big|\; \sharp\{k\mid v(a_k)=r\}=x_r,\; 0\leq r\leq\ell-1
\2\}
\eeq
is an orbit of the action (\ref{w-act}),
and all orbits are parametrized as $x$ runs over $X(\ell,N)$.
On the other hand, we define the `dual valuation' $\wh{v}: \wh{R} \to \{-\infty\}\cup \{0,\cdots,\ell-1\}$ on the character group $\wh{R}$ by
\beq
  \label{dv}
\wh{v}(\chi)=\max\1\{ v(a)\mid a \in R,\; \chi(a)\neq 1 \2\},\qquad \text{except for}\;\; \wh{v}(1)=-\infty.
\eeq
We may identify the character group $\wh{A}=\wh{R^N}$ with the direct product $(\wh{R})^N$.
Then as $n=\1(n_r\2)\rl$ runs over $X(\ell, N)$,
\beq
\calp(n)=\1\{
\1(\chi_k\2)\kn\in(\wh{R})^N \;\Big|\; \sharp\{k\mid \wh{v}(\chi_k)=r\}=n_r,\; 0\leq r\leq\ell-1
\2\}
\eeq
completes the list of orbits of the contragredient action of $G$ on $\wh{A}$.
Under the notations above, we have the followings\cite{ore}:

\begin{pp}
The value $\omega_n(x):=\omega_{\calp(n)}\1(\calo(x)\2)$ of the zonal spherical function associated with an orbit $\calp(n)$ on an orbit $\calo(x)$ is expressed as
\beq 
  \label{w-kyu}
\omega_n(x)=K_n\uel\1(x;\! \begin{array}{c} \frac{q-1}{q}\end{array}\!; N  \2),
\eeq
where the right hand side is the $\ell$-variate Krawtchouk polynomial defined in the following subsection.
\end{pp}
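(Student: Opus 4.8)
The plan is to compute the zonal spherical function $\omega_{\calp(n)}$ directly from its characterization as the unique $G$-invariant function on $A=R^N$ lying in $V_{\calp(n)}=\sp\calp(n)$ normalized by $\omega_{\calp(n)}(0)=1$, and then recognize the resulting expression as the $\ell$-variate Krawtchouk polynomial. Since $V_{\calp(n)}$ is spanned by the characters in the orbit $\calp(n)$, the zonal spherical function is, up to normalization, the orbit sum $\sum_{\chi\in\calp(n)}\chi$ restricted to $A$; the first step is therefore to write
\beq
 \omega_n(x)=\frac{1}{|\calp(n)|}\sum_{\chi\in\calp(n)}\chi(a),\qquad a\in\calo(x),
\eeq
which is well-defined because the sum is constant on $G$-orbits (both $\calp(n)$ and $\calo(x)$ being $G$-stable), and automatically satisfies $\omega_n(0)=1$. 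The bulk of the work is to evaluate this character sum.

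Next I would factor the computation through the one-variable structure of $\wh R$. Using the dual valuation $\wh v$, the character group $\wh R$ is filtered by the subgroups $\{\chi\mid\wh v(\chi)\le s\}$, and a character $\chi_k$ with $\wh v(\chi_k)=r$ pairs with a coordinate $a_k\in R$ through a Gauss-sum-type expression depending only on $v(a_k)$ and on a multiplicative ``angle'' in the residue field. Concretely, fix $a=(a_k)\in\calo(x)$: then $\chi(a)=\prod_{k=1}^N\chi_k(a_k)$, and summing over all $(\chi_k)$ with the prescribed profile $n=(n_r)$ amounts to distributing the $N$ slots among the valuation classes and, within each pairing of a $\wh v=r$ character against a $v=s$ coordinate, summing a one-variable character sum over $R$. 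These one-variable sums are elementary: over $\frako/\frakp^\ell$ one gets the familiar value $q^{\ell-1}(q-1)$-type contributions when $r+s$ is small, and a cancellation (value $0$ or $-q^{\,\cdot}$) at the ``boundary'' $r+s=\ell-1$ or $\ell$, which is exactly where the $\frac{q-1}{q}$ parameter enters. Assembling the combinatorial bookkeeping — a sum over how the coordinate-valuation profile $x$ meets the character-valuation profile $n$ — produces a sum of products of binomial coefficients and powers of $q$ and $(q-1)$.

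Finally I would match this explicit expression, term by term, against the definition of $K_n\uel\1(x;\frac{q-1}{q};N\2)$ given in the next subsection. Both sides are polynomials in the $x_r$ (in the combinatorial sense, as functions on $X(\ell,N)$), indexed by $n\in X(\ell,N)$, and both families are orthogonal with respect to the natural multinomial weight on $X(\ell,N)$; since the decomposition \eqref{decom0} is multiplicity free, the $\omega_n$ are determined among symmetric-type functions by being a $G$-invariant element of the correct isotypic block with $\omega_n(0)=1$, and the multivariate Krawtchouk polynomials enjoy an analogous normalization $K_n(0)=1$. So it suffices to check the two normalizations agree and that the leading combinatorial data line up. The main obstacle is the middle step: carrying out the character sum over $\calp(n)$ cleanly, in particular organizing the double profile $(x,n)$ bookkeeping so that the one-variable Gauss-sum values at the valuation boundary collapse into precisely the hypergeometric-type coefficients appearing in the definition of $K_n\uel$. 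Once that identity of generating sums is in hand, the proposition follows by comparing with the formula for the $\ell$-variate Krawtchouk polynomial, and I would also remark that the parameter $\frac{q-1}{q}$ is forced by the cardinalities $|\{\chi\in\wh R:\wh v(\chi)=r\}|=q^r-q^{r-1}$ relative to $|R|=q^\ell$.
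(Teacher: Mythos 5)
The paper itself contains no proof of this proposition: it is imported verbatim from the author's reference \cite{ore}, so there is no in-text argument to compare yours against. That said, your plan is the standard and (in outline) correct one for a Gelfand pair of the form $(A\rtimes G,\,G)$ with $A$ abelian: the zonal spherical function is the normalized orbit sum $\omega_n(a)=|\calp(n)|^{-1}\sum_{\chi\in\calp(n)}\chi(a)$, this factors coordinatewise as $\prod_k\chi_k(a_k)$, and the one-variable sums $\sum_{\wh{v}(\chi)=r}\chi(a_k)$ take the three values $q^r(q-1)$, $-q^r$, $0$ according as $v(a_k)>r$, $v(a_k)=r$, $v(a_k)<r$; the ratio $-1/(q-1)$ between the boundary and generic values, against the orbit size $q^r(q-1)$, is exactly what produces the parameter $\frac{q-1}{q}$. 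Two cautions. First, your closing shortcut --- both families are orthogonal for the same weight and normalized at $0$, so it suffices to match ``leading data'' --- is not valid as stated: orthogonality with respect to a fixed weight plus the value at one point does not determine a family of functions on $X(\ell,N)$ (one can recombine members within blocks without disturbing either property); to make that route work you would need a triangularity statement for both $\omega_n$ and $K_n\uel$ with respect to a partial order on $X(\ell,N)$, with matching leading coefficients. Second, the step you explicitly defer is in fact the entire substance of the proof: one must sum over matrices $(m_{r,s})$ recording how many coordinates of valuation $s$ are paired with a character of dual valuation $r$ (with zero contribution unless $s\geq r$), and show that the resulting nested sum of multinomial coefficients and powers of $q$, $q-1$ collapses to the telescoping product over $r$ of one-variable Krawtchouk polynomials with the shifted degrees $N_r=N-x|^{r-1}-n|_{r+1}$ appearing in (\ref{lkraw}). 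Until that identity is verified the proposition is not proved; as a plan, however, this is the right architecture and almost certainly the one followed in \cite{ore}.
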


\subsection{Multivariate Krawtchouk polynomials}

The 1-variate Krawtchouk polynomials $K_n(x; p; N)$ are discrete orthogonal polynomials with respect to the binomial distribution.
For integers $n\leq N$, a variable $x$ and a probabilistic parameter $0<p<1$, they are expressed by use of the Gaussian hypergeometric function ${}_2F_1$ as
\beq
  \label{kraw}
K_n(x; p; N)= {}_2F_1\1( \begin{array}{c} -n,\;-x\\-N\end{array}  ;
\begin{array}{c} \frac1p \end{array} \2)=\sum_{k=0}^n\frac{(-n)_k(-x)_k}{(-N)_k\, k!p^k},
\eeq
where $(a)_k$ is the Pochhammer symbol; $(a)_0=1$ and $(a)_k=\prod_{s=0}^{k-1}(a+s)\;(k\geq 1)$.

Griffiths\cite{Gri} first gave the multivariate extensions of the Krawtchouk polynomials, which are orthogonal for the multinomial distribution.
In this article we deal with special cases which are expressed in terms of products of 1-variate Krawtchouk polynomials, as defined by Xu\cite{Xu} (ours differ from Xu's in just normalizations).
For $x=\1(x_r \2)\rl,\, n=\1(n_r\2)\rl \in X(\ell, N)$ and a parameter set $\bm{p}=\1(p_r\2)\rl$, we define the $\ell$-variate Krawtchouk polynomials by
\beq
   \label{lkraw}
K_n\uel(x; \bm{p}; N)=\frac1{(-N)_{|n|}}\prod\rl(-N_r)_{n_r} K_{n_r}(x_r ; p_r ; N_r),
\eeq
\beq
\text{where}\quad N_r=N-x|^{r-1}-n|_{r+1}
\eeq
(recall $x|^{r-1}=\sum_{s=0}^{r-1}x_s,$ and $n|_{r+1}=\sum_{s=r+1}^{\ell-1}n_s$ in our notation(\ref{|})).
Especially when $\bm{p}=(p,\cdots,p)$, we denote $K_n\uel(x; p; N):=K_n\uel(x; \bm{p}; N)$, which is used in (\ref{w-kyu}). 
See \cite{Xu} or \cite{ore} for the definition by generating functions and typical properties of the polynomials.

\section{Decomposition of the spherical representations}

\subsection{Harmonic analysis on $R=\frako / \frakp^\ell$ with $\ox$-action}

Here we examine harmonic analysis on $R=\frako / \frakp^\ell$ with the $\ox$-action,
which is the $N=1$ case of, and based on which we will develop harmonic analysis on $R^N$ with the action (\ref{w-act}) of $G=(\ox)^N \rtimes \fraks_N$
.  

The $\ox$-actions on $R$ and on the character group $\wh{R}$ respectively yield $\ell+1$ orbits
\beq
R_r=\{ a \in R \mid v(a)=r \}\quad(r=0,1,\cdots,\ell-1,\infty),
\eeq
\beq
\wh{R}_r=\{ \chi \in \wh{R} \mid \wh{v}(\chi)=r \}\quad(r=-\infty, 0,1,\cdots,\ell-1)
\eeq
according to the values of $v$ and $\widehat{v}$. We can make a correspondence between these orbits as follows:
We fix a character $\theta\in \widehat{R}_{\ell-1}$ and define a group isomorphism
\beq
  \label{iso}
R\longrightarrow \wh{R},\quad a\mapsto \theta(a\;\cdot\;),
\eeq
where $\theta(a\;\cdot\;)$ denotes a map $b \mapsto \theta(ab)$ for $b\in R$.
Then as easily seen, it maps $R_r$ to $\wh{R}_{\ell-r-1}\;(0\leq r \leq \ell-1)$ and $R_{\infty}$ to $\wh{R}_{-\infty}$.
The cardinalities $I_r:=|\wh{R}_r|$ of the orbits are given by
\beq
I_{-\infty}=1,\qquad I_r=q^r(q-1)\;\;(0\leq r\leq \ell-1),
\eeq
since $|{R}_r|=|\frakp^r/\frakp^\ell - \frakp^{r+1}/\frakp^\ell|=q^{\ell-r-1}(q-1)$ and the correspondence above.
We sometimes use $I_{-1}:=1$ for convenience sake.

Since $\wh{R}$ is an orthonormal basis of the space $\bbc[R]$ of all functions on $R$, we have an orthogonal decomposition
\beq
\bbc[R]=W_{-\infty} \oplus \bigoplus\rl W_r,\qquad
\text{where}\;\;\;W_r=\sp{\wh{R}_r}
\eeq
(we remark that $W_{-\infty}=\bbc\cdot 1$ is the space of constant functions on $R$).
As mentioned in (\ref{decom0}), it is the irreducible decomposition as a representation of the semi-direct group $R \rtimes \ox$.
We now aim to decompose each $W_r$ into irreducible (1-dimensional) $\ox$-representations.
Let us define a decreasing sequence $\1\{ \ox_r \2\}_{r=-\infty,0,1,2,\cdots}$ of subgroups of $\ox$ by
\beq
\ox_{-\infty}=\ox,\qquad \ox_r=\1\{ 1+a\pi^{r+1} \mid a \in\frako  \2\}\;\;(r\geq 0)
\eeq
(recall that $\pi$ is a generator of $\frakp$).
We remark that $\ox_r$ is the stabilizer subgroup for an arbitrary $\chi\in\wh{R}_r$.

\begin{pp}
For $r=-\infty, 0,1,\cdots$, there exist just $I_r$ numbers of 1-dimensional $\ox$-representations that are trivial on $\ox_r$.
And $W_r$ is the direct sum of all of them with multiplicity free as an $\ox$-representation.
\end{pp}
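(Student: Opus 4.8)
The plan is to realize each $W_r$ as an induced representation and read off its decomposition from the well-known decomposition of the regular representation of a finite abelian group.

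First I would dispose of the counting statement. A one-dimensional $\ox$-representation is simply a character $\psi\colon\ox\to\bbc^\times$, and it is trivial on $\ox_r$ exactly when it factors through the quotient $\ox/\ox_r$. For $r\geq 0$ the reduction map identifies $\ox/\ox_r\cong(\frako/\frakp^{r+1})^\times$, a finite abelian group whose order is $q^{r+1}-q^r=q^r(q-1)=I_r$; for $r=-\infty$ the quotient $\ox/\ox_{-\infty}$ is trivial and $I_{-\infty}=1$. Since a finite abelian group has exactly as many characters as elements, there are precisely $I_r$ such one-dimensional representations, and they are pairwise inequivalent because they are already distinct as characters of $\ox/\ox_r$.

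Next I would treat the decomposition of $W_r=\sp\wh{R}_r$ for $0\leq r\leq\ell-1$ (the case $r=-\infty$ being trivial, as $W_{-\infty}=\bbc\cdot 1$). By the remark preceding the statement, $\ox$ acts transitively on the orbit $\wh{R}_r$ with stabilizer $\ox_r$ at each point, so $\wh{R}_r\cong\ox/\ox_r$ as an $\ox$-set, and in particular the $\ox$-action on $W_r$ factors through the finite group $\ox/\ox_r$. Identifying the basis $\wh{R}_r$ of $W_r$ with $\ox/\ox_r$ turns $W_r$ into the permutation module $\bbc[\ox/\ox_r]$ under the translation action, i.e.\ into the regular representation of the finite abelian group $\ox/\ox_r$ (equivalently $W_r\cong\operatorname{Ind}_{\ox_r}^{\ox}\mathbf{1}$). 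The regular representation of a finite abelian group is the multiplicity-free direct sum of all its irreducible characters; pulling this back along $\ox\twoheadrightarrow\ox/\ox_r$ yields
\[
W_r\;\cong\;\bigoplus_{\psi}\psi,
\]
the sum being over exactly the $I_r$ one-dimensional $\ox$-representations $\psi$ trivial on $\ox_r$, each occurring once. The same conclusion follows alternatively from Frobenius reciprocity: the multiplicity of $\psi$ in $\operatorname{Ind}_{\ox_r}^{\ox}\mathbf{1}$ equals $\dim\operatorname{Hom}_{\ox_r}(\mathbf{1},\psi|_{\ox_r})$, which is $1$ when $\psi$ is trivial on $\ox_r$ and $0$ otherwise, so comparing dimensions ($\dim W_r=I_r$) forces every such $\psi$ to appear.

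I do not expect a genuine obstacle here; the only points needing care are bookkeeping ones, namely that $\wh{R}_r$ is a single $\ox$-orbit with stabilizer exactly $\ox_r$ — which is the content of the remark already recorded — and that, since $\ox$ is profinite, one should first pass to the finite quotient $\ox/\ox_r$ before invoking finite-group representation theory.
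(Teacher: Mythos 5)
Your proposal is correct and follows essentially the same route as the paper: identify $W_r$ with $\operatorname{Ind}_{\ox_r}^{\ox}\mathbf{1}$ via the transitive $\ox$-action on $\wh{R}_r$ with stabilizer $\ox_r$, then apply Frobenius reciprocity (equivalently, the character decomposition of the regular representation of the finite abelian quotient $\ox/\ox_r$). Your only addition is the explicit computation $|\ox/\ox_r|=|(\frako/\frakp^{r+1})^\times|=q^r(q-1)=I_r$, where the paper instead reads off the count from $\dim W_r=I_r$; both are fine.
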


\begin{proof}
We have $W_r=\mathrm{ind}_{\ox_r}^{\ox}1$ by the definition of induced representation (because for an arbitrary $\chi \in \wh{R}_r$, $\bbc\chi$ is a trivial representation of $\ox_r$, and actions of representatives of $\ox/\ox_r$ on $\chi$ yield elements of $\wh{R}_r$ once for each).
Frobenius reciprocity therefore gives $[W_r : V]_{\ox}=[ V : 1]_{\ox_r}$ for an arbitrary 1-dimensional $\ox$-representation $V$.
It asserts that $V$ occurs in $W_r$ just once if and only if $V$ is trivial on $\ox_r$.
Since $\mathrm{dim}\, W_r=I_r$, we conclude the statement.
\end{proof}

We thus let $\{\xi\uei\}\iir$ be all characters of $\ox$ which are trivial on $\ox_r$ (In particular, $\xi\ueo$ is the trivial character of $\ox$).
Then we have the $\ox$-irreducible decomposition
\beq
W_r=\bigoplus\iir\bbc\pri,
\eeq
where $\pri$ is the generator of $\xi\uei$-component of $W_r$, normarized as $\langle \pri, \pri \rangle=1$
with respect to the inner product (\ref{nai}) of $\bbc[R]$.
It means that $\pri$ is a relatively $\ox$-invariant function on $R$:
\beq
  \label{ri}
\pri(c^{-1}a)=\xi\uei(c)\pri(a)\quad(c\in\ox,\; a\in R).
\eeq 
Furthermore when $1\leq i<j\leq I_r$, functions $\pri$ and $\prj$ are orthogonal.
In fact, using element $c\in \ox$ such that $\xi\uei(c)\overline{\xi^{(j)}(c)}\neq 1$, we have
$$
\begin{array}{rl}\displaystyle
\langle\pri,\prj\rangle
&\displaystyle
=\frac1{|R|}\sum_{a\in R}\pri(a)\overline{\prj(a)}
=\frac1{|R|}\sum_{a\in R}\pri(c^{-1}a)\overline{\prj(c^{-1}a)}\\
&\displaystyle
=\xi\uei(c)\overline{\xi^{(j)}(c)}\langle\pri,\prj\rangle,
\end{array}
$$
which implies $\langle\pri,\prj\rangle=0$.
Consequently $\bbc[R]$ acquires an orthonormal basis
\beq
  \label{ob1}
\{1\}\cup\{\pri\mid0\leq r \leq \ell-1,\;1\leq i\leq I_r\}.
\eeq

By the definition, $\pro$ is the unique $\ox$-invariant normarized function belonging to $W_r$.
We thus obviously have
\beq
  \label{pro}
\pro=\frac1{\sqrt{I_r}}\sum_{\chi\in\wh{R}_r}\chi.
\eeq
As for $i\geq2$, coefficients of $\pri$ in the expansion over $\wh{R}_r$ satisfy the following properties (we will use it as a lemma for Proposition \ref{maru}).

\begin{lem}
  \label{kage}
Let $0 \leq r \leq \ell-1,\; 2\leq i \leq I_r$, and put
\beq
  \label{ec}
\varphi_r^{(i)}=\sum_{\chi\in \widehat{R}_r}\varepsilon_\chi \chi
\eeq
with coefficients $\varepsilon_\chi \in\bbc$. We also fix $u$ as an integer $0\leq u \leq r$ or $u=-\infty$. Then

\vs{0.05in}
$(1)$\quad $\dis \sum_{\chi\in \wh{R}_r} |\ep_\chi|^2=1$ hold.

$(2)$\quad If $i\leq I_u$ and $\chi, \chi' \in\wh{R}_r$ are in a same $\ox_u$-orbit, then we have $\ep_\chi=\ep_{\chi'}$.

\vs{0.05in}
$(3)$\quad If $i>I_u$, then for any $\ox_u$-orbit $\calp\subset \wh{R}_r$, we have $\dis \sum_{\chi\in\calp} \ep_\chi=0$.
\end{lem}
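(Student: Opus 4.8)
The plan is to push the relative invariance (\ref{ri}) down to the expansion coefficients $\ep_\chi$ and then invoke the orthogonality relations for characters of finite groups. First I would expand $\pri=\sum_{\chi\in\wh{R}_r}\ep_\chi\chi$ in (\ref{ri}); since the $\ox$-action permutes the orbit $\wh{R}_r$, re-indexing $\sum_{\chi}\ep_\chi\,\chi(c^{-1}\,\cdot\,)$ and matching coefficients against the basis $\wh{R}_r$ of $W_r$ gives the transformation law
\beq
  \label{planlaw}
\ep_{\chi'}=\overline{\xi\uei(c)}\;\ep_\chi
\qquad\text{whenever }\ c\in\ox,\ \chi,\chi'\in\wh{R}_r,\ \chi'(\,\cdot\,)=\chi(c^{-1}\,\cdot\,).
\eeq
Part $(1)$ then needs nothing but orthonormality of $\wh{R}_r$ in $\bbc[R]$ together with the normalization $\langle\pri,\pri\rangle=1$: these give $1=\sum_{\chi\in\wh{R}_r}|\ep_\chi|^2$ at once.

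Parts $(2)$ and $(3)$ both rest on one dictionary entry: with the characters $\xi\uei$ enumerated compatibly across levels --- so that for every level $u$ the first $I_u$ of them are exactly the characters trivial on $\ox_u$, which is consistent since the $\ox_u$ form a decreasing chain --- the inequality $i\leq I_u$ says precisely that $\xi\uei$ is trivial on $\ox_u$, while $i>I_u$ says it is not. Granting this, $(2)$ is immediate: writing two elements $\chi,\chi'$ of one $\ox_u$-orbit as $\chi'(\,\cdot\,)=\chi(c^{-1}\,\cdot\,)$ with $c\in\ox_u$, the hypothesis $i\leq I_u$ forces $\xi\uei(c)=1$, so (\ref{planlaw}) gives $\ep_{\chi'}=\ep_\chi$.

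For $(3)$ I would fix a base point $\chi_0\in\calp$. Since $\ox_r$ is the full $\ox$-stabilizer of every element of $\wh{R}_r$ and $\ox_r\subseteq\ox_u$ (because $u\leq r$), the $\ox_u$-stabilizer of $\chi_0$ is exactly $\ox_r$; picking coset representatives $c_1,\dots,c_m$ of $\ox_u/\ox_r$ writes $\calp=\{\chi_0(c_j^{-1}\,\cdot\,)\}_{j=1}^{m}$ (so $|\calp|=I_r/I_u$), and (\ref{planlaw}) gives $\sum_{\chi\in\calp}\ep_\chi=\ep_{\chi_0}\sum_{j=1}^{m}\overline{\xi\uei(c_j)}$. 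Because $\xi\uei$ is trivial on $\ox_r$ it descends to a character of the finite group $\ox_u/\ox_r$, and that character is nontrivial exactly because $i>I_u$; hence $\sum_{j=1}^{m}\xi\uei(c_j)=0$ by character orthogonality, and the sum over $\calp$ vanishes.

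There is no genuinely hard step here; the only thing deserving care is the indexing convention used above --- fixing the enumeration of the $\xi\uei$ simultaneously for all levels so that ``$i\leq I_u$'' and ``$\xi\uei$ trivial on $\ox_u$'' really coincide --- together with the one-line observation that the $\ox_u$-stabilizer of a character in $\wh{R}_r$ is $\ox_r$ rather than a proper subgroup, which is what makes the coset count and the character-sum argument work. Once those are in place, $(2)$ and $(3)$ are just the ``trivial'' and ``nontrivial'' halves of the orthogonality relation for $\xi\uei$, read off from (\ref{planlaw}).
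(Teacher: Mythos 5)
Your proof is correct and follows essentially the same route as the paper: both push the relative invariance (\ref{ri}) down to the coefficients $\ep_\chi$ via the transformation law $\ep_{c\cdot\chi}=\overline{\xi\uei(c)}\,\ep_\chi$, and your care about the enumeration convention (``$i\leq I_u$ iff $\xi\uei$ is trivial on $\ox_u$'') and about the $\ox_u$-stabilizer being exactly $\ox_r$ matches what the paper uses implicitly. The only cosmetic difference is in $(3)$: the paper evaluates the relatively invariant $W_\calp$-component at $a=0$ (getting $\varphi_\calp(0)=\xi\uei(c)\varphi_\calp(0)$ with $\xi\uei(c)\neq1$), whereas you sum the transformation law over coset representatives of $\ox_u/\ox_r$ and invoke orthogonality of the nontrivial character $\xi\uei$ on that quotient --- two equally valid forms of the same vanishing argument.
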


\begin{proof}
(1) is obvious from $\la \pri, \pri \ra=1$.
Now by the reason of (\ref{ri}), we remark that the function $\pri$ is $\ox_u$-invariant when and only when $1\leq i \leq I_u$.
For (2), we let an element $c\in \ox_u$ such that $\chi'=\chi(c^{-1}\ten)$ act on the both sides of (\ref{ec}), and compare the coefficients on $\chi'$.
Then we shall get $\ep_{\chi'}=\ep_{\chi}$.

For (3), we consider a decomposition
\beq
W_r=W_\calp\oplus W_\calp^\perp,\quad
W_\calp=\bigoplus_{\chi\in\calp}\bbc\chi
\;\;\text{and}\;\;
W_\calp^\perp=\bigoplus_{\chi \in \wh{R}_r - \calp} \bbc\chi,
\eeq
that is stable under the $\ox_u$-action.
We note the $W_\calp$-component of $\pri$ is $\dis\varphi_\calp:=\sum_{\chi \in \calp}\ep_\chi\chi$.
And we can take an element $c\in \ox_u$ such that $\xi\uei(c)\neq1$, as $i>I_u$.
When $c$ acts, since $\pri$ is multiplied by $\xi\uei(c)$,
so is $\varphi_\calp$; so we have
\beq
\varphi_\calp(c^{-1}a)=\xi\uei(c)\varphi_\calp(a) \quad(a\in R).
\eeq
By substituting $a=0$ in the both sides, we have $\varphi_\calp(0)=0$, that is $\dis \sum_{\chi\in\calp}\ep_\chi=0$.
\end{proof}

At the last of this part, we study $R$-actions on functions on $R$, namely the translations.
Let us consider an element $\pi^s := \pi^s + \frakp^\ell \in R$ for $0\leq s \leq \ell-1$.
We especially examine an action of $\pi^s$ on the function $\varphi_r^{(1)}$.
Since $W_r$ is an $R\rtimes \frako^\times$-representation and has a decomposition (20),
we know so far that the translated function $\varphi_r^{(1)}(\pi^s + \,\cdot\;)$ can be written as a linear combination of $\{ \varphi_r^{(i)} \}_{i=1}^{I_r}$.

\begin{pp}
  \label{maru}
Let $0\leq s,r \leq \ell-1$.

$(1)$\quad If $s<r$, we can write $\dis \pro(\pi^s+\ten)=\sum_{i=I_{r-s-1}+1}^{I_{r-s}}\grsi\pri$,
with coefficients $\grsi$ satisfying
\beq
 \label{gam}
|\grsi|=\frac1{\sqrt{q^{r-s-1}}(q-1)}.
\eeq

$(2)$\quad When $s=r$, we can write $\dis \pro(\pi^r+\ten)=\sum_{i=1}^{q-1}\grri\pri$,
with coefficients $\grro=-\frac1{q-1}$ and $\grri\;(2\leq i\leq q-1)$ satisfying $(\ref{gam})$.

\vs{0.05in}
$(3)$\quad If $s>r$, we have $\pro(b+\ten)=\pro$ for an arbitrary $b\in R_s$.
\end{pp}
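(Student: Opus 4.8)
The plan is to compute the Fourier coefficients $\grsi=\la\pro(\pi^s+\ten),\pri\ra$ of the translated function directly, using that the elements of $\wh{R}_r$ are group homomorphisms together with Lemma \ref{kage}. Since $\chi(b+a)=\chi(b)\chi(a)$ for $\chi\in\wh{R}_r$, the expansion (\ref{pro}) immediately gives $\pro(b+\ten)=\frac1{\sqrt{I_r}}\sum_{\chi\in\wh{R}_r}\chi(b)\,\chi\in W_r$ for every $b\in R$, and when $v(b)>r$ the definition (\ref{dv}) of $\wh{v}$ forces $\chi(b)=1$ for all $\chi\in\wh{R}_r$; this proves $(3)$. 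So assume $0\le s\le r$ and write $\pro(\pi^s+\ten)=\sum\iir\grsi\pri$. The coefficient at $i=1$ is immediate from (\ref{pro}): $\la\chi,\pro\ra=1/\sqrt{I_r}$ for all $\chi\in\wh{R}_r$, so $\gamma_{r,s}^{(1)}=\frac1{I_r}\sum_{\chi\in\wh{R}_r}\chi(\pi^s)$. I would evaluate this telescoping sum by observing that $\{\chi\in\wh{R}:\wh{v}(\chi)\le m\}$ is precisely the character group of $\frako/\frakp^{m+1}$, so $\sum_{\wh{v}(\chi)\le m}\chi(\pi^s)$ equals $q^{m+1}$ if $s\ge m+1$ and $0$ otherwise; taking the difference of the values at $m=r$ and $m=r-1$ gives $\sum_{\chi\in\wh{R}_r}\chi(\pi^s)=0$ for $s<r$ and $=-q^r$ for $s=r$. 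Hence $\gamma_{r,s}^{(1)}=0$ when $s<r$, and $\grro=-q^r/I_r=-1/(q-1)$, the value asserted in $(2)$.

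For the vanishing of the remaining coefficients I would combine Lemma \ref{kage} with the elementary observation that $\chi\mapsto\chi(\pi^s)$ is constant on every $\ox_u$-orbit of $\wh{R}_r$ when $u\ge r-s$, and has vanishing sum on every $\ox_u$-orbit when $u<r-s$; both follow from $c^{-1}\pi^s\equiv\pi^s\pmod{\frakp^{\,s+u+1}}$ for $c\in\ox_u$ together with the fact that $\chi\in\wh{R}_r$ is trivial on $\frakp^{s+u+1}$ exactly when $u\ge r-s$. Taking $u=r-s$: $\chi(\pi^s)$ is orbit-constant, while Lemma \ref{kage}(3) yields $\sum_{\chi\in\calp}\ep_\chi=0$ on each $\ox_{r-s}$-orbit $\calp$ once $i>I_{r-s}$, so $\grsi=0$ there. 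Taking $u=r-s-1$ (relevant when $s<r$): Lemma \ref{kage}(2) makes $\ep_\chi$ orbit-constant once $i\le I_{r-s-1}$, while $\sum_{\chi\in\calp}\chi(\pi^s)=0$ on each $\ox_{r-s-1}$-orbit, so again $\grsi=0$. Only $i=1$ and the range $I_{r-s-1}<i\le I_{r-s}$ (which, with the convention $I_{-1}=1$, is the $s=r$ case) can contribute, matching the index ranges in $(1)$ and $(2)$.

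For the common modulus of the surviving coefficients — the real content — I would use the explicit expansion $\pri=\frac1{\sqrt{I_r}}\sum_{\chi\in\wh{R}_r}\ol{\xi\uei(c_\chi)}\,\chi$, where $\chi_0\in\wh{R}_r$ is fixed and $c_\chi\in\ox/\ox_r$ is the unique coset with $c_\chi\cdot\chi_0=\chi$ (this follows from $W_r=\mathrm{ind}_{\ox_r}^{\ox}1$ and projection onto the $\xi\uei$-line; in particular $|\ep_\chi\uei|=1/\sqrt{I_r}$ for every $\chi$). Substituting and reindexing by $c$ turns $\grsi$ into $\frac1{I_r}\sum_{c\in\ox/\ox_r}\chi_0(c^{-1}\pi^s)\,\xi\uei(c)$; since $b\mapsto\chi_0(\pi^s b)$ restricted to $\ox$ is a primitive additive character $\psi$ of $\frako/\frakp^{r-s+1}$ — hence $\ox_{r-s}$-invariant — and $\xi\uei$ factors through $\ox/\ox_{r-s}\cong(\frako/\frakp^{r-s+1})^\times$ for the surviving $i$, this collapses to $\grsi=\frac{q^s}{I_r}\,g(\xi\uei,\psi)$, a Gauss sum over $(\frako/\frakp^{r-s+1})^\times$ with $\xi\uei$ primitive. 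I would then invoke the standard value $|g(\xi\uei,\psi)|^2=q^{r-s+1}$, obtained by expanding $|g|^2=\sum_{c}\xi\uei(c)\sum_{b}\psi(b(c-1))$ and evaluating the inner character sum, which gives $|\grsi|=\frac{q^s}{q^r(q-1)}\,q^{(r-s+1)/2}=\frac1{\sqrt{q^{r-s-1}}(q-1)}$, i.e.\ (\ref{gam}); and when $s=r$ and $\xi\ueo$ is trivial the same sum degenerates to $\sum_{c\in(\frako/\frakp)^\times}\psi(c)=-1$, once more recovering $\grro=-1/(q-1)$.

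The step I expect to be the main obstacle is precisely this last one: the uniform modulus of $\grsi$ across the whole block $I_{r-s-1}<i\le I_{r-s}$ is not forced by Lemma \ref{kage} and Parseval alone (those give only $\sum_i|\grsi|^2=1$, using that translation is unitary for the inner product (\ref{nai})), and really rests on the Gauss-sum modulus computation; everything else is careful, but routine, bookkeeping with characters and $\ox_u$-orbits.
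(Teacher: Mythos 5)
Your argument is correct, and parts (3), the vanishing of $\grsi$ outside the range $I_{r-s-1}<i\leq I_{r-s}$, and the value $\grro=-\frac1{q-1}$ are established essentially as in the paper: the same telescoping sum over $\wh{R}|^m\cong(\frako/\frakp^{m+1})\,\hat{}\,$, and the same interplay between Lemma \ref{kage} and the constancy/vanishing of $\chi\mapsto\chi(\pi^s)$ on $\ox_u$-orbits (the paper packages the $i>I_{r-s}$ exclusion slightly differently, by observing that $\pro(\pi^s+\ten)$ is itself $\ox_{r-s}$-invariant). Where you genuinely diverge is the modulus (\ref{gam}). The paper never writes down the coefficients $\ep_\chi$ of $\pri$ explicitly; instead it reduces $|\grsi|$ to $|\pri(\pi^s)|$ via $\grsi=\frac1{\sqrt{I_r}}\overline{\xi\uei(-1)}\,\overline{\pri(\pi^s)}$, uses relative $\ox$-invariance to replace $|\pri(\pi^s)|^2$ by the average $\frac1{|R_s|}\sum_{b\in R_s}|\pri(b)|^2$, and evaluates that second moment by the three-case character sum $\sum_{b\in R_s}\chi(b)\overline{\chi'(b)}$ together with Lemma \ref{kage}(1)--(3), obtaining $q^\ell/|R_s|$. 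You instead exploit the induced-module structure to pin down $|\ep_\chi|=1/\sqrt{I_r}$ for every $\chi$ and collapse $\grsi$ to $\frac{q^s}{I_r}$ times a Gauss sum of a primitive multiplicative character against a primitive additive character of $\frako/\frakp^{r-s+1}$, then invoke $|g|^2=q^{r-s+1}$. The two computations are of comparable length and are cousins of one another (the standard proof of the Gauss-sum modulus is itself a second-moment argument), but yours buys strictly more: it identifies $\grsi$ exactly up to the phase of $\ep_{\chi_0}$, recovers $\grro=-\frac1{q-1}$ as the degenerate $\xi\ueo=1$ case of the same sum, and makes transparent why the modulus is uniform across the whole block $I_{r-s-1}<i\leq I_{r-s}$ --- a uniformity which, as you rightly note, Parseval alone cannot give.
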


\begin{proof}
First we prove (3),
so let $s>r$ and $b\in R_s$.
By the definition of dual valuation $\wh{v}$ in (\ref{dv}),
we have $\chi(b+\ten)=\chi$ for all $\chi \in \wh{R}_r$.
So the statement is obvious by the form of $\pro$ in (\ref{pro}).

From now on we assume $s\leq r$. As remarked in the proof of the last lemma, the $\frako^\times_{r-s}$-invariant subspace of $W_r$ is $\dis W_r^{\frako^\times_{r-s}}=\bigoplus_{i=1}^{r-s}\bbc\pri$.
Also, $\pro(\pi^s+\ten)$ is $\ox_{r-s}$-invariant.
In fact, for $1+a\pi^{r-s+1} \in \ox_{r-s}\;(a \in \frako)$ and $b\in R$, we have
$$
\begin{array}{rl}
\pro\big(\pi^s+(1+a\pi^{r-s+1})b\big)
&=\pro\big( (1+a\pi^{r-s+1})(\pi^s+b)-a\pi^{r+1} \big)\\
&=\pro\big( (1+a\pi^{r-s+1})(\pi^s+b) \big)
=\pro(\pi^s+b)
\end{array}
$$
(the second equality is by (3) of this proposition).
It permits us awhile to write
\beq
  \label{ittan}
\pro(\pi^s+\ten)=\sum_{i=1}^{I_{r-s}}\grsi\pri\quad(\grsi\in\bbc),
\eeq
where the coefficients $\gamma_{r,s}^{(i)}\in\bbc$ have an expression
\beq
  \label{grsi}
\grsi=\frac1{\sqrt{I_r}} \overline{\xi\uei(-1)} \overline{\pri(\pi^s)}.
\eeq
In fact, writing $\pri=\sum_{\chi\in\wh{R}_r} \ep_\chi\chi$ as in the last lemma, we have
$$
\begin{array}{rl}
\overline{\grsi}
&\dis =\1\la \pri, \pro(\pi^s+\ten) \2\ra
=\1\la \sum_{\chi\in\wh{R}_r} \ep_\chi\chi,\;
\frac1{\sqrt{I_r}}\sum_{\chi\in\wh{R}_r}\chi(\pi^s)\chi\2\ra
\\[0.3in]
&\dis =\frac1{\sqrt{I_r}}\sum_{\chi\in\wh{R}_r}\ep_\chi\chi(-\pi^s)
=\frac1{\sqrt{I_r}}\pri(-\pi^s)
=\frac1{\sqrt{I_r}}\xi\uei(-1)\pri(\pi^s).
\end{array}
$$

To prove remainings of the statements, we prepare notations and a lemma. For $0\leq s \leq \ell-1$, we denote
\beq
R|_s:=\bigcup_{u\geq s} R_u=\frakp^s/\frakp^\ell,
\qquad
\wh{R}|^s:=\bigcup_{u\leq s}\wh{R}_u,
\eeq
where each union also includes $R_{\infty}=\{0\}$ or
$\wh{R}_{-\infty}=\{1\}$.
These are subgroups respectively of $R$ and $\wh{R}$.

\begin{lem}
  \label{sisi}
Let $0\leq s\leq r\leq \ell-1$.
For $\chi, \chi'\in \wh{R}_r$, the following conditions are equivalent:

$($i$)$\quad $\chi\sim\chi'\;(\ox_{r-s})$\quad(that is, they are in a same $\ox_{r-s}$-orbit).

\vs{.05in}
$($ii$)$\quad $\chi=\chi'$ on $R|_s$.

\vs{.05in}
$($iii$)$\quad $\chi\chi'^{-1}\in \wh{R}|^{s-1}$.
\end{lem}

\begin{proof}
({\it i}) $\Rightarrow$ ({\it ii}) and ({\it ii}) $\Rightarrow$ ({\it iii}) may be easy.
({\it iii}) $\Rightarrow$ ({\it i}) may be justified by using the isomorphism (\ref{iso}) that preserves $\ox_{r-s}$-orbits.
We leave details to the reader.
\end{proof}

We also remark that there is a one-to-one correspondence between $\wh{R}|^s$ and the character group $(\frako / \frakp^{s+1})\,\hat{}$ intertwined by the canonical surjection $\rho:R=\frako / \frakp^\ell \to \frako / \frakp^{s+1}$;
\beq
   \label{rho-taiou}
(\frako / \frakp^{s+1})\,\hat{} \to \wh{R}|^s,\quad \chi \mapsto \chi\circ \rho.
\eeq

Now we prove that if $s<r$ and $i\leq I_{r-s-1}$, then $\grsi$ in (\ref{ittan}) is 0.
Because of (\ref{grsi}), it is the same as $\pri(\pi^s)=0$.
By Lemma \ref{kage}\;(2), we can write
\beq
  \label{}
\pri=\sum_{\calp \in \ox_{r-s-1}\bs\wh{R}_r} \ep_\calp \sum_{\chi \in\calp}\chi\quad(\ep_\calp \in \bbc).
\eeq
So it suffices to prove that $\sum_{\chi\in\calp} \chi(\pi^s)=0$ for an arbitrary $\ox_{r-s-1}$-orbit $\calp \subset \wh{R}_r$.
We take a fixed $\chi_0\in\calp$, then by the equivalence of ({\it i}) and ({\it iii}) in Lemma \ref{sisi},
we can write
\beq
\calp=\1\{ \chi_0\chi \;\Big|\; \chi \in \wh{R}|^s \2\}.
\eeq
We therefore have, using the correspondence (\ref{rho-taiou}),
$$
\sum_{\chi\in\calp} \chi(\pi^s)
= \sum_{\chi \in \rh|^s} \chi_0(\pi^s)\chi(\pi^s)
= \chi_0(\pi^s)\sum_{\chi\in(\frako / \frakp^{s+1})\,\hat{}} \chi(\pi^s)=0.
$$

We can also prove $\gamma_{r,r}^{(1)}= - \frac1{q-1}$ by the use of correspondence (31), as
$$
\grro=\1\la \pro(\pi^r+\ten), \pro \2\ra
=\frac1{q^r(q-1)} \sum_{\chi\in\rh_r} \chi(\pi^r), \quad\text{and}
$$
$$
\sum_{\chi\in\rh_r}\chi(\pi^r)
= \sum_{\chi\in\rh|^r}\chi(\pi^r)-\sum_{\chi\in\rh|^{r-1}}\chi(\pi^r)
=\sum_{\chi\in(\frako / \frakp^{r+1})\,\hat{}} \chi(\pi^r)-\sum_{\chi\in(\frako / \frakp^{r})\,\hat{}} 1
=-q^r.
$$

Now it only remains to prove (\ref{gam}) no matter $s<r$ or $s=r$.
By (\ref{grsi}), we know $\dis |\grsi|=\frac1{\sqrt{I_r}}|\pri(\pi^s)|$.
And since $\pri$ is relatively $\ox$-invariant, we have $\dis |\pri(\pi^s)|=|\pri(b)|$ for any $b\in R_s$.
We write $\dis \pri=\sum_{\chi \in\rh_r}\ep_\chi\chi$ as in Lemma \ref{kage}, then
\beq
  \label{chubho}
|R_s||\pri(\pi^s)|^2
=\sum_{b\in R_s}|\pri(b)|^2
=\sum_{\chi, \chi' \in \rh_r}\ep_\chi \overline{\ep_{\chi'}}\sum_{b\in R_s} \chi(b) \ol{\chi'(b)}.
\eeq
Here we note for $\chi, \chi' \in \rh_r$,
\beq
\sum_{b\in R_s} \chi(b)\ol{\chi'(b)}=
\begin{cases}
q^{\ell-s-1}(q-1) & \text{if}\;\; \chi \sim \chi'\;(\ox_{r-s}),\\
-q^{\ell-s-1} & \text{if}\;\; \chi \nsim \chi'\;(\ox_{r-s}) \;\;\text{and}\;\; \chi\sim\chi'\;(\ox_{r-s-1}),\\
0 & \text{if}\;\; \chi \nsim \chi'\;(\ox_{r-s-1})
\end{cases}
\eeq
(when $s=r$, the reader should replace $\ox_{r-s-1}$ with $\ox=\ox_{-\infty}$ here and afterwards).
In fact, the first case is obvious since $\chi=\chi'$ on $R_s$ (Lemma \ref{sisi}) and $|R_s|=q^{\ell-s-1}(q-1)$.
The second and third cases are respectively by Lemma \ref{sisi},
$$
\sum_{b\in R_s}\chi(b)\ol{\chi'(b)}
=\sum_{b\in R|_s}\chi(b)\ol{\chi'(b)}-\sum_{b\in R|_{s+1}}\chi(b)\ol{\chi'(b)}
=\begin{cases}
-\big|R|_{s+1}\big|=-q^{\ell-s-1},\\ 0.
\end{cases}
$$
We also remark that Lemma \ref{kage} provides $\dis (1)\; \sum_{\chi\in\rh_r}|\ep_\chi|^2=1, \;
(2)\; \ep_{\chi'}=\ep_\chi$ if $\chi'\sim\chi\;(\ox_{r-s})$
and $\dis(3)\; \sum_{\chi'\sim\chi\;(\ox_{r-s-1})}\ep_{\chi'}=0$, since $I_{r-s-1}<i\leq I_{r-s}$ now.
The right hand side of (\ref{chubho}) is therefore,
$$
\begin{array}{l}\dis
q^{\ell-s-1}(q-1)\sum_{\chi\in\rh_r}\ep_\chi \sum_{\chi'\sim\chi\,(\ox_{r-s})}\ol{\ep_{\chi'}}
-q^{\ell-s-1}\1( \sum_{\chi'\sim\chi\,(\ox_{r-s-1})} \ol{\ep_{\chi'}}-
\sum_{\chi'\sim\chi\,(\ox_{r-s})} \ol{\ep_{\chi'}}
\2)\\[0.4in]\dis
= q^{\ell-s-1}(q-1)\cdot q^s\sum_{\chi\in\rh_r}|\ep_\chi|^2+
q^{\ell-s-1}\cdot q^s\sum_{\chi\in\rh_r}|\ep_\chi|^2
=q^\ell.
\end{array}
$$
It concludes
$$
|\grsi|=\frac1{\sqrt{I_r}}|\pri(\pi^s)|
=\frac1{\sqrt{I_r}}\sqrt{\frac{q^\ell}{|R_s|}}
=\frac1{\sqrt{q^{r-s-1}}(q-1)}.
$$

\vs{-0.2in}
\end{proof}

\subsection{Decomposition of $V_n$ along relatively $\ox$-invariant functions on R}

As described in Section 1, any $G=(\ox)^N \rtimes \fraks_N$-orbit $\calp(n)$ in $\wh{R^N}$ is parametrized by $n\in\xln$, and defines the spherical representation $V_n:=V_{\calp(n)}=\sp\calp(n) \subset \bbc[R^N]$ of $\bbg=R^N \rtimes G$.
We remark that
\beq
\dim V_n=|\calp(n)|=\binom{N}n\prod\rl I_r{}^{n_r}
=\binom{N}n q^{\sum\rl rn_r}(q-1)^{|n|},
\eeq
where $\dis \binom{N}n=\frac{N!}{(N-|n|)! \prod\rl n_r!}$ is the multinomial coefficient.
Here we construct $G$-subrepresentations of $V_n$ by the use of relatively $\ox$-invariant functions $\pri\;(0\leq r\leq \ell-1, 1\leq i \leq I_r)$ defined in the last subsection ( we do not reach the irreducible decomposition).

For $n=(n_r)\rl\in\xln$, we define the following parameter set:
\beq
  \label{an}
\fraka(n)=\1\{
\al=(\al_r)\rl \;\;\big|\; \al_r=(\al_r\uei)\iir \in (\bbz_{\geq 0})^{I_r},\;|\al_r|=n_r\;(\text{for each}\;\; r)
\2\}
\eeq
(that is, $\alpha \in \mathfrak{A} (n)$ is a $\ell$-tuple, each member $\alpha_r$ of which is a so-called composition of $n_r$).
And for $\alpha\in\mathfrak{A}$, we define a subspace $V_{n,\alpha}$ of $V_n$ as the span of
\beq
  \label{ba}
\calb(\al)=\1\{
\bigotimes\kn f_k\;\;\Bigg|\;\;
\begin{array}{l}
\text{ $\{f_k\}\kn$ consists of $N-|n|$ numbers of 1 and} 
\\
\text{ $\al_r\uei$ numbers of $\pri\;(0\leq r \leq \ell-1,\; 1 \leq i \leq I_r)$.} 
\end{array}
\2\},
\eeq
where the tensor product 
$\otimes
\hspace{-0.16in} {}_{{}_{{}_{{}_{k=1}}}}
\hspace{-0.16in} {}^{{}^{{}^N}} \;
f_k$
is realized as a function on $R^N$ by
\beq
\1( \bigotimes\kn f_k \2)(a)
=\prod\kn f_k(a_k)
\quad
\1( a=(a_k)\kn\in R^N \2).
\eeq
Let us confirm that each $\otimes
\hspace{-0.16in} {}_{{}_{{}_{{}_{k=1}}}}
\hspace{-0.16in} {}^{{}^{{}^N}} \;
f_k$ satisfying the condition of (\ref{ba}) is in $V_n$.
In fact, we expand it in terms of $\1\{\;\otimes
\hspace{-0.16in} {}_{{}_{{}_{{}_{k=1}}}}
\hspace{-0.16in} {}^{{}^{{}^N}} \;
\chi_k \;\big|\; \chi_k \in \widehat{R}\,
\2\}$, a basis of $\bbc[R^N]$.
Then for $0\leq r \leq \ell-1$, the number of $\chi_k \in \widehat{R}_r$ are $|\alpha_r|=n_r$ in any terms, because $\varphi_r^{(i)} \in W_r =$ Span\,$\widehat{R}_r$.
We also remark that \;\;$\cup
\hspace{-0.2in} {}_{{}_{{}_{{}_{\alpha \in \mathfrak{A}(n)}}}}
\mathcal{B}(\alpha)
$ is an orthogonal system of $V_n$ with respect to the inner product (\ref{nai}) of $\bbc[R^N]$, following the orthogonality of (\ref{ob1}).
By counting the cardinality of bases $\mathcal{B}(\alpha)$, we have
\beq
\dim V_{n,\al}=\binom{N}{n}\prod\rl\binom{n_r}{\al_r},
\eeq
where $\dis \binom{n_r}{\al_r}=\frac{n_r!}{\prod\iir\al_r\uei}$ is the multinomial coefficient.
We can make sure an orthogonal direct sum
\beq
  \label{vnd}
V_n=\bigoplus_{\al \in \fraka(n)} V_{n,\al}
\eeq
holds by comparing dimensions of the both sides;
$$
\ba{r}
\dis
\sum_{\al\in\fraka(n)}\dim V_{n,\al}=
\binom{N}n \sum_{\al\in\fraka(n)}\prod\rl\binom{n_r}{\al_r}
=\binom{N}n\prod\rl\sum_{|\al_r|=n_r}\binom{n_r}{\al_r}
\\ \dis
=\binom{N}n\prod\rl I_r{}^{n_r}=\dim V_n
\ea
$$
(the third equality is due to the multinomial theorem).
Actually, (\ref{vnd}) turns out to be a decomposition into $G$-subrepresentations:

\begin{pp}
For any $\al \in\fraka(n)$, $V_{n,\al}$ is a $G$-subrepresentation of $V_n$.
\end{pp}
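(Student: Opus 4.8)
The plan is to show that each element of $G$ maps the spanning set $\calb(\al)$ of $V_{n,\al}$ into $\bbc^\times\cdot\calb(\al)$; since every element of $G$ has the form $(c,\sigma)$ with $c=(c_k)\kn\in(\ox)^N$ and $\sigma\in\fraks_N$, and since $\calb(\al)$ spans $V_{n,\al}$, this immediately gives the $G$-invariance. So the whole argument reduces to computing the action of a single generic element $(c,\sigma)$ on a pure tensor.

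First I would record the inverse in the wreath product: from the action (\ref{w-act}) one checks that $(c,\sigma)^{-1}=\big((c_{\sigma(j)}^{-1})_j,\,\sigma^{-1}\big)$, hence $\big((c,\sigma)^{-1}a\big)_k=c_{\sigma(k)}^{-1}a_{\sigma(k)}$. Then for a pure tensor $\bigotimes\kn f_k$ the permutation representation gives, for $a=(a_m)\kn\in R^N$,
\beq
  \label{gact}
\Big((c,\sigma)\cdot\bigotimes\kn f_k\Big)(a)
=\prod\kn f_k\big(c_{\sigma(k)}^{-1}a_{\sigma(k)}\big)
=\prod_{m=1}^N f_{\sigma^{-1}(m)}\big(c_m^{-1}a_m\big),
\eeq
where the last equality is the substitution $m=\sigma(k)$. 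Thus $(c,\sigma)\cdot\bigotimes\kn f_k=\bigotimes_{m=1}^N h_m$ with $h_m(b)=f_{\sigma^{-1}(m)}(c_m^{-1}b)$.

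Next I would invoke the relative $\ox$-invariance (\ref{ri}): if $f_{\sigma^{-1}(m)}=1$ then $h_m=1$, while if $f_{\sigma^{-1}(m)}=\pri$ then $h_m=\xi\uei(c_m)\,\pri$. Consequently $(c,\sigma)\cdot\bigotimes\kn f_k$ equals a nonzero scalar (a product of values $\xi\uei(c_m)$) times $\bigotimes_{m=1}^N f_{\sigma^{-1}(m)}$. The latter tensor uses exactly the same multiset of functions $\{f_k\}\kn$ — namely $N-|n|$ copies of $1$ and $\al_r\uei$ copies of $\pri$ — only reassigned to the positions $1,\dots,N$ via $\sigma$; by the definition (\ref{ba}) it is therefore again a member of $\calb(\al)$. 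Hence $(c,\sigma)\cdot b\in\bbc^\times\cdot\calb(\al)\subset V_{n,\al}$ for every $b\in\calb(\al)$, and $V_{n,\al}$ is a $G$-subrepresentation.

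There is no serious obstacle here; the only point demanding care is the bookkeeping in (\ref{gact}) — correctly placing the inverses $c_{\sigma(k)}^{-1}$ and the permutation $\sigma^{-1}$ coming from the action (\ref{w-act}) and from the convention $(g\cdot F)(a)=F(g^{-1}a)$ — after which the relative invariance (\ref{ri}) does all the work. As a sanity check one may note that this computation is consistent with the already-established orthogonal decomposition (\ref{vnd}), since $G$ manifestly permutes the spaces $\{V_{n,\al}\}_{\aia}$ trivially (each is preserved), not merely as a set.
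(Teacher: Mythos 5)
Your proof is correct and follows essentially the same route as the paper: the paper also observes that the generators of $\calb(\al)$ are relatively $(\ox)^N$-invariant by (\ref{ri}) and that $\fraks_N$ merely permutes the tensor components, hence preserves the multiset defining $\calb(\al)$; you simply carry out this argument for a single generic element $(c,\sigma)$ with the inverse bookkeeping made explicit.
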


\begin{proof}
We just need to check that $V_{n,\al}$ is stable under the $(\ox)^N$- and $\fraks_N$-actions separately.
Firstly, any generator $\tf \in \calb(\al)$ is relatively $(\ox)^N$-invariant, since its components $f_k$ are all relatively $\ox$-invariant.
Secondly, the $\fraks_N$-actions just cause permutations of the components $f_k$'s of $\tf$, so it is still in $V_{n,\al}$.
Observations above prove the statement.
\end{proof}

We prepare a notation for the symmetrizations of functions on $R^N$:

\begin{dfn}
Let $m\leq N$ be an integer and $\{ f_k \}_{k=1}^m$ be a set of functions on $R$ (allowing some of which might be the same).
Then we put $f_k=1$ for $k=m+1,\cdots, N$ and define
\beq
  \label{sym}
\1[\bigotimes_{k=1}^m f_k \2]_{\fraks_N}
=\sum_{\sigma \in \fraks_N / \,\stb(\otimes f_k)}
\bigotimes\kn f_{\sigma^{-1}(k)},
\eeq
where $\stb(\otimes f_k) \subset \fraks_N$ is the stabilizer subgroup of $\tf$.
\end{dfn}

As introduced in Section 1.1, the zonal spherical function $\omega_n$ corresponding to a parameter $n\in \xln$ is the unique $G$-invariant normarized function belonging to the spherical representation $V_n$.
Using the notation above, we can explicitly write down $\omega_n$ as follows;
\beq
  \label{wn}
\omega_n=\frac1{\binom{N}n Q}
\1[  
\bigotimes\rl \pro{}^{\otimes n_r}
\2]_{\fraks_N},
\eeq
where $\dis Q=\prod\rl \sqrt{I_r{}^{n_r}}$
\;(
since $\pro(0)=\sqrt{I_r}$ from (\ref{pro}), we need to normarize by it).

We take $u=(u_s)_{s=0}^{\ell-1} \in \xln$ and put
\beq
  \label{piu}
\pi^u:=(
\underbrace{1,\cdots,1}_{u_0\,\text{tuple}},
\underbrace{\pi,\cdots,\pi}_{u_1\,\text{tuple}},
\cdots,
\underbrace{\pi^{\ell-1}, \cdots, \pi^{\ell-1}}_{u_{\ell-1}\,\text{tuple}},
0,\cdots, 0
) \in R^N.
\eeq
We examine components of the translated function $\omega_n(\pi^u+\ten) \in V_n$
with respect to the decomposition (\ref{vnd}).
Here and afterwards we use the following notation:

\vs{0.1in}
\nt\underline{Notation.}\; For any function $f\in\bbc[R^N]$,
let $V_{n,\al}(f)$ denote the $V_{n,\al}$-component of $f$.

\begin{pp}
  \label{wau}
For $\al\in\fraka(n)$ and $u\in\xln$, we define a parameter set
$$
\mathcal{W}(\al,u)=\1\{\,
w=\1(w_{r,s}\2)_{0\leq r\leq s\leq \ell-1} \in (\bbz_{\geq0})^{\binom{\ell+1}{2}}
\;\Big|\; \text{satisfying the conditions below\;}
\2\}
$$

$($i$)$\quad $\dis \sum_{s=r}^{\ell-1}w_{r,s}\leq \al_r\ueo$
\;\; for $0\leq r \leq \ell-1$, 

$($ii$)$\quad $\dis \sum_{r=0}^{s}w_{r,s}\leq 
u_s-\sum_{r=s}^{\ell-1}\,\sum_{i=I_{{}_{r-s-1}}+1}^{I_{r-s}}
\al_r\uei$\;\;
for $0\leq s \leq \ell-1$, and 

$($iii$)$\quad $\dis |u|+|\al\ueo|-N\leq|w|$,\;\;
where $\al\ueo:=(\al\ueo_r)\rl$.

\vs{.2in}
\nt
Then we have
\beq
  \label{vnawnu}
\ba{l}\dis
V_{n,\al}\big(
\omega_n(\pi^u+\ten)
\big)
=\frac1{\binom{N}n Q}\1(
\prod_{0\leq s\leq r\leq \ell-1}\;
\prod_{i=I_{{}_{r-s-1}}+1}^{I_{r-s}} \grsi{}^{\al_r\uei}
\2)
\sum_{w\in \mathcal{W}(\al,u)}
\1(
\prod\rl\grro{}^{w_{r,r}}
\2)
\\ \dis
\times \bigotimes_{s=0}^{\ell-1}\1[
\bigotimes_{r=0}^s \pro{}^{\otimes w_{r,s}}
\otimes \bt_{r=s}^{\ell-1} \;\bt_{i=I_{{}_{r-s-1}}+1}^{I_{r-s}} \pri{}^{\otimes \al_r\uei} 
\2]_{\fraks_{u_s}}
\otimes
\1[
\bt\rl \pro{}^{\otimes\1(\al_r\ueo-\sum_{s=r}^{\ell-1}w_{r,s}  \2)}
\2]_{\fraks_{N-|u|}}
\ea
\eeq
(when $r=s$, we put $I_{r-s-1}=I_{-1}:=1$).
\end{pp}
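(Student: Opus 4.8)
The plan is to start from the explicit formula (\ref{wn}) for $\omega_n$, apply the translation $a\mapsto\pi^u+a$ (under which $V_n$ is stable, being a $\bbg=R^N\rtimes G$-subrepresentation of $\bbc[R^N]$), substitute the one-variable formulas of Proposition~\ref{maru} coordinate by coordinate, and then read off the $V_{n,\al}$-part. To avoid the dependence of $[\;\cdot\;]_{\fraks_N}$ on stabilizers I would first rewrite $\omega_n$ as an honest group average: the stabilizer in $\fraks_N$ of the (padded) tensor $\bt\rl\pro{}^{\otimes n_r}$ has order $\prod\rl n_r!\cdot(N-|n|)!$, and $\binom{N}{n}\cdot\prod\rl n_r!\cdot(N-|n|)!=N!$, so $\omega_n=\frac1{N!\,Q}\sum_{\sigma\in\fraks_N}\bt\kn g_{\sigma^{-1}(k)}$ for a fixed tuple $(g_1,\dots,g_N)$ consisting of $n_r$ copies of $\pro$ (each $r$) and $N-|n|$ copies of $1$. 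The $k$-th coordinate of $\pi^u$ is $\pi^s$ on a block $B_s$ of $u_s$ indices ($0\le s\le\ell-1$) and $0$ on a block $B_\infty$ of $N-|u|$ indices; so by Proposition~\ref{maru} the translated factor $g_{\sigma^{-1}(k)}((\pi^u)_k+\ten)$ equals $g_{\sigma^{-1}(k)}$ when that function is $1$, or is $\pro$ on $B_\infty$ or on $B_s$ with $s>r$; it equals $\sum_{i=1}^{q-1}\grri\pri$ when it is $\pro$ on $B_r$; and it equals $\sum_{i=I_{r-s-1}+1}^{I_{r-s}}\grsi\pri$ when it is $\pro$ on $B_s$ with $s<r$.

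Expanding these sums by multilinearity writes $\omega_n(\pi^u+\ten)$ as a combination of ordered monomials $H=\bt\kn h_k$ with $h_k$ running over the basis (\ref{ob1}), and such $H$ are precisely the elements of the orthogonal basis $\bigcup_{\aia}\calb(\al)$ of $V_n$; hence $V_{n,\al}(\omega_n(\pi^u+\ten))$ is the partial sum over those $H$ whose factor multiset has $\al_r\uei$ copies of $\pri$ for all $r,i$ (automatically $|\al_r|=n_r$ factors lie in $W_r$, since translation preserves each $W_r$). The combinatorial core is to trace each factor back to its source. Because the intervals $(I_{m-1},I_m]$, $m\ge0$ (with $I_{-1}:=1$), partition $\{2,3,\dots\}$, a factor $\pri$ with $i\ge2$ can arise only from a $\pro$ lying in the block $B_{r-m(i)}$, $m(i)$ being the unique $m$ with $I_{m-1}<i\le I_m$; thus all $\al_r\uei$ of these factors sit in $B_s$ with $s=r-m(i)\ge0$ (nonnegative as $i\le I_r$), each carrying the scalar $\grsi$. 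A factor $\pro$ can arise only from a $\pro$ in some $B_s$ with $s\ge r$ or in $B_\infty$; writing $w_{r,s}$ ($r\le s\le\ell-1$) for the number of $\pro$'s in $B_s$, the remaining $\al_r\ueo-\sum_{s=r}^{\ell-1}w_{r,s}$ copies lie in $B_\infty$, and only those in $B_r$ contribute a nontrivial scalar, namely $\grro$. So the scalar attached to a contributing $H$ factors as $\bigl(\prod_{0\le s\le r\le\ell-1}\prod_{i=I_{r-s-1}+1}^{I_{r-s}}\grsi{}^{\al_r\uei}\bigr)\cdot\prod\rl\grro{}^{w_{r,r}}$, depending on $H$ only through $w=(w_{r,s})_{0\le r\le s\le\ell-1}$.

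Finally one sums over $\sigma$ and over $H$. For a fixed admissible ordered monomial $H$ there are exactly $\prod\rl n_r!\cdot(N-|n|)!$ permutations $\sigma$ producing it (for each $r$, match the $n_r$ labelled copies of $\pro$ to the $n_r$ slots with $h_k\in W_r$, and match the $1$'s to the remaining slots), and each produces $H$ with the same scalar; with the prefactor $\frac1{N!\,Q}$ this collapses the overall constant to $\frac1{\binom{N}{n}Q}$. It then remains to identify $\sum_{H:\,w(H)=w}H$ with the displayed symmetrized tensor: an admissible $H$ with data $w$ is exactly an arrangement placing, in $B_s$, $w_{r,s}$ copies of $\pro$ ($0\le r\le s$) and $\al_r\uei$ copies of $\pri$ ($s\le r\le\ell-1$, $I_{r-s-1}<i\le I_{r-s}$) with the rest $1$'s, and in $B_\infty$, $\al_r\ueo-\sum_{s=r}^{\ell-1}w_{r,s}$ copies of $\pro$ with the rest $1$'s; summing over such arrangements block by block produces $\bt_{s=0}^{\ell-1}[\,\cdots\,]_{\fraks_{u_s}}\otimes[\,\cdots\,]_{\fraks_{N-|u|}}$. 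The three conditions defining $\mathcal{W}(\al,u)$ are precisely the nonemptiness of this set of arrangements: condition (i) is $\al_r\ueo-\sum_{s=r}^{\ell-1}w_{r,s}\ge0$, condition (ii) says the block $B_s$ is not overfilled, condition (iii) says $B_\infty$ is not overfilled, and when any of them fails the corresponding term is empty. I expect the main obstacle to be purely organizational: keeping the index ranges straight so that every $\pri$ with $i\ge2$ is attributed to its unique source block, and checking that the symmetrization constants collapse correctly in passing from the single $\fraks_N$-average to the block-wise symmetrizations. The only representation-theoretic inputs are Proposition~\ref{maru} and the orthogonality of the basis (\ref{ob1}).
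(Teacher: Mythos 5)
Your proposal is correct and follows essentially the same route as the paper: expand $\omega_n(\pi^u+\ten)$ factorwise via Proposition \ref{maru}, trace each resulting $\pri$ ($i\geq 2$) back to its unique source block using the partition of $\{2,3,\dots\}$ by the intervals $(I_{m-1},I_m]$, record the surviving arrangements by the data $w=(w_{r,s})$, and read off conditions (i)--(iii) as nonemptiness of the set of arrangements. The only difference is bookkeeping --- you average over all of $\fraks_N$ and group ordered monomials by $w$, whereas the paper works with double cosets $\fraks_u\backslash\fraks_N/\fraks_n$ and a bijection $S(\al,u)\leftrightarrow\mathcal{W}(\al,u)$ --- and your verification of the collapsing stabilizer constants is sound.
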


\begin{proof}
We put
\beq
 (r_k)\kn=(
-\infty,\cdots,-\infty,
\underbrace{0,\cdots,0}_{n_0\,\text{tuple}},
\underbrace{1,\cdots,1}_{n_1\,\text{tuple}},
\cdots,
\underbrace{\ell-1, \cdots, \ell-1}_{n_{\ell-1}\,\text{tuple}}
),
\eeq
and regard the product group $\fraks_n:=\fraks_{N-|n|}
\times \fraks_{n_0} \times \cdots \times \fraks_{n_{\ell-1}}$ as its stabilizer subgroup in $\fraks_N$.
Then from (\ref{wn}) we can write
\beq
  \label{wnu}
\omega_n(\pi^u+\ten)=\frac1{\binom{N}nQ}
\sum_{\sigma\in \fraks_N / \fraks_n}\1\{
\bt_{s=0}^{\ell-1} \bt_{k=u|^{s-1}+1}^{u|^s}
\varphi_{r_{\sigma^{-1}(k)}}\ueo(\pi^s+\ten)\otimes \bt_{k=|u|+1}^N \varphi_{r_{\sigma^{-1}(k)}}\ueo
\2\}
\eeq
(where we regard $\varphi_{-\infty}^{(1)}=1$).
Each tensor component $\varphi_r^{(1)}(\pi^s + \,\cdot\;)$ in here is, by Proposition 2.1.3, either expanded as $\Sigma
\hspace{-0.07in} {}_{{}_{{}_{{}_{i}}}}
\; \gamma_{r,s}^{(i)} \varphi_r^{(i)}
$ (if $s \leq r$), or equal to $\varphi_r^{(1)}$ (if $s>r$, including the case of $r=-\infty$).
We expand (\ref{wnu}) by taking $\gamma_{r,s}^{(i)} \varphi_r^{(i)}$ one by one component.
Then each term belongs to $V_{n,\beta}$ for some $\beta \in \mathfrak{A}(n)$.
For a fixed $\alpha\in\mathfrak{A}(n)$, thus $V_{n,\alpha}\1( \omega_n(\pi^s+\,\cdot\;) \2)$ is just the sum of terms belonging to $V_{n,\alpha}$ in such the expansion.

For $\sigma \in \fraks_N / \fraks_n$, we denote by
\beq
  \label{ts1}
 T(\sigma)=\bt_{s=0}^{\ell-1}\,\bt_{k=u|^{s-1}+1}^{u|^s}
\varphi_{r_{\sigma^{-1}(k)}}\ueo(\pi^s+\ten)\otimes \bt_{k=|u|+1}^N \varphi_{r_{\sigma^{-1}(k)}}\ueo,
\eeq
the term corresponding to $\sigma$ in (\ref{wnu}),
and consider a condition that
\beq
  \label{ts2}
\text{$T(\sigma)$ has the non-zero $V_{n,\al}$-component in its expansion.}
\eeq
We remark that this condition is stable under the left $\fraks_u$-action (where $\fraks_u:=\fraks_{u_0}\times\cdots\times
\fraks_{u_{\ell-1}}\times\fraks_{N-|u|}$ is the stabilizer subgroup of $\pi^u$ in $\fraks_N$).
In fact, for any $\tau\in\fraks_u$, the change between $T(\sigma)$ and $T(\tau\sigma)$ is just a permutation within $(u|^{s-1}+1)$-th to $u|^s$-th for some $0\leq s\leq \ell-1$, or $(|u|+1)$-th to $N$-th tensor components in (\ref{ts1}).

So we can define a set
\beq
  \label{sau}
S(\al,u)=\{ \sigma\in \fraks_u\bs \fraks_N / \fraks_n\mid \text{$T(\sigma)$ satisfies (\ref{ts2})} \}.
\eeq
We now make a one-to-one correspondence between $S(\al,u)$ and $\mathcal{W}(\al,u)$.
Let $\sigma \in S(\al,u)$.
Then for any $0\leq s\leq \ell-1$, the $u_s$-tuple $(r_{\sigma^{-1}(k)})_{k=u|^{s-1}+1}^{u|^s}$
includes exactly $\dis \sum_{i=I_{{}_{r-s-1}}+1}^{I_{r-s}} \al_r\uei$ numbers of each $r>s$, and more than $\dis \sum_{i=2}^{q-1} \al_s\uei$ numbers of $s$.
Because by Proposition \ref{maru}, $\pri$ for $r\geq s,\; I_{r-s-1}+1\leq i\leq I_{r-s}$ appears only in the expansion of $\pro(\pi^s+\ten)$,
and conversely the expansion has just such terms except for the $r=s$ case which has $\varphi_s\ueo$.
We hence put
\begin{eqnarray}
&&w_{s,s}=\sharp\{ k\mid r_{\sigma^{-1}(k)}=s,\; u|^{s-1}+1\leq k\leq u|^s \}
- \sum_{i=2}^{q-1}\al_s\uei,\quad\text{and}
\\[.0in]
&&w_{r,s}=\sharp\{ k\mid r_{\sigma^{-1}(k)}=r,\; u|^{s-1}+1\leq k\leq u|^s \}\quad\text{for $r<s$.}
\end{eqnarray}
We remark, for $r\leq s$, that $w_{r,s}$ represents the number of $\pro$ among $(u|^{s-1}+1)$-th to $u|^s$-th tensor components of each term of $T(\sigma)$ belonging to $V_{n,\al}$.
They therefore satisfy the condition ({\it i}).
Also ({\it ii}) is obvious from how we took them.
For each $0\leq r\leq \ell-1$, the number of $r$ among $(r_{\sigma^{-1}(k)})_{k=1}^{|u|}$ is now
\beq
\sum_{s=0}^r \; \sum_{i=I_{{}_{r-s-1}}+1}^{I_{r-s}} \al_r\uei + \sum_{s=r}^{\ell-1}w_{r,s}=n_r-\al_r\ueo+\sum_{s=r}^{\ell-1}w_{r,s},
\eeq
hence that among $(r_{\sigma^{-1}(k)})_{k=|u|+1}^N$ is $\dis\al_r\ueo-\sum_{s=r}^{\ell-1}w_{r,s}$.
So we have
\beq
\sum\rl\1(
\al_r\ueo-\sum_{s=r}^{\ell-1}w_{r,s}
\2)
\leq N-|u|,
\eeq
which induces the condition ({\it iii}).

Conversely when $w=(w_{r,s})\in\mathcal{W}(\al,u)$ is given, for $0\leq s\leq \ell-1$, we let a $u_s$-tuple $(r_{\sigma^{-1}(k)})_{k=u|^{s-1}+1}^{u|^s}$ consist of $\dis \sum_{i=I_{{}_{r-s-1}}+1}^{I_{r-s}}\al_r\uei$ numbers
of each $r>s$,\;
$\dis \sum_{i=2}^{q-1}\al_s\uei +w_{s,s}$ numbers of $s$,
\;$w_{r,s}$ numbers of each $r<s$, and $-\infty$'s for the remainings.
Also let a $(N-|u|)$-tuple $(r_{\sigma^{-1}(k)})_{k=|u|+1}^{N}$ consist of $\dis \al_r\ueo-\sum_{s=r}^{\ell-1}w_{r,s}$ numbers of each $r$ and $-\infty$'s for the remainings.
Such $\sigma$ is uniquely determined as $\sigma \in S(\al,u)$.
The process above gives a one-to-one correspondence between $S(\al,u)$ and $\mathcal{W}(\al,u)$.

We have already explained how to take terms belonging to $V_{n,\al}$ in the expansion of (\ref{wnu}).
That is, considering interpretations of $w\in \mathcal{W}(\al,u)$ in the explanation above, we have
$$
\ba{rl}\dis
V_{n,\al}\big(
\omega_n(\pi^u+\ten)
\big)%
=
&\dis
\frac1{\binom{N}n Q}
\sum_{\sigma\in S(\al,u)}\1[
V_{n,\al}(T(\sigma))
\2]_{\fraks_u}
\\ \dis
=
&\dis
\frac1{\binom{N}n Q}
\sum_{w\in\mathcal{W}(\al,u)}
\1[
\bt_{s=0}^{\ell-1}\1(
\bt_{r=0}^{s-1}\pro{}^{\otimes w_{r,s}} \otimes
(\gamma_{s,s}\ueo \varphi_s\ueo)^{\otimes w_{s,s}}
\2.\2.
\\ &\dis
\1.
\otimes \bt_{r=s}^{\ell-1}\,\bt_{i=I_{{}_{r-s-1}}+1}^{I_{r-s}}
(\grsi\pri)^{\otimes \al_r\uei}\otimes 1 \otimes\cdots\otimes1
\2)
\\ &\dis
\1.
\otimes\1(
\bt\rl\pro{}^{\otimes \1( \al_r\ueo - \sum_{s=r}^{\ell-1}w_{r,s} \2)}\otimes1 \otimes\cdots\otimes1
\2)
\2]_{\fraks_u},
\ea
$$
which is equal to the statement.
\end{proof}

\section{An addition theorem for multivariate Krawtchouk polynomials}

\subsection{The basic idea for addition theorems}

Here following Dunkl\cite{Dun}, we exhibit the basic idea to induce addition theorems of zonal spherical functions by the use of harmonic analysis in the setting of Section 1.1 (that a compact group $G$ acts on an finite abelian group $A$).
The procedure is as below:

\vs{.1in}
(i)\quad We decompose a spherical representation $V_\calp$ (that is irreducible as a representation of $\bbg=A\rtimes G)$ as a representation of the subgroup $G\subset \bbg$.

\vs{.03in}
(ii)\quad We let a particular element $a\in A$ act on the zonal spherical function $\op$, that is, consider a function $\op(a+\ten)\in \vp$, and decompose it into components according to (i).

\vs{.03in}
(iii)\quad We decompose the right hand side of the identity
\beq
  \label{comb}
{}\;\;\;\;\;
\op\big( g(a)-b \big)= \dim \vp \cdot 
\big\la g\cdot\op(a+\ten),\, \op(b+\ten) \big\ra
\quad (a,b\in A,\; g\in G)
\eeq
as a form of summation according to (ii).

\vs{.08in}
We remark that the identity (\ref{comb}) is easily deduced from the convolution identity
$\dis \op * \omega_{\calp'}=\delta_{\calp,\calp'} \frac{|\bbg|}{|\calp|}\op$ of zonal spherical functions (\cite{Mac}, section VII).

In the case of Dunkl\cite{Dun}, the decomposition in (i) is $G$-irreducible, and turns out to be multiplicity free and parametrized by a `single' sequence of integers.
For that reason, an organized addition theorem is achieved at the last step (iii).
In our case (on a local field in Section 1.2), however, the irreducible decomposition in (i) seems to be difficult.
And actually that is not necessarily multiplicity free.
So we use decompositions in the last section, that is not necessarily irreducible.
And we will assume some conditions for easy calculations.

\subsection{Construction of an addition theorem on some assumptions}

Along the procedure in the last subsection, here we propose a kind of addition theorem.
We take rather strong assumptions for easy calculations, nevertheless the result might be new and impressive.

We restrict the translating element $a\in R^N$ in the procedure (ii) to have the form of
\beq
1^t:=(\,\overbrace{1,\cdots, 1}^{t\text{-tuple}}
, \overbrace{0,\cdots, 0}^{(N-t)\text{-tuple}}
\,)\in R^N
\eeq
for a natural number $t\leq N$.
Then the condition ({\it ii}) in Proposition \ref{wau} forces that
\beq
   \label{acon}
\alpha_r^{(i)}=0\quad \text{for any $1\leq r \leq \ell-1,\; 2\leq i \leq I_{r-1}$,}
\eeq
furthermore, any $w=(w_{r,s})_{r\leq s} \in \mathcal{W}(\alpha, u)$ must be $w_{r,s}=0$ except for
\beq
t+|\alpha^{(1)}|-N\leq z:= w_{0,0} \leq t+|\alpha^{(1)}|-|n|
\eeq
from conditions ({\it i}), ({\it ii}), ({\it iii}) of Proposition \ref{wau}.
Then the proposition is reduced as follows:

\begin{col}
For $0\leq t \leq \ell-1$ and $\aia$ in the condition $(\ref{acon})$, we have
\beq
  \label{vnawnt}
\ba{l}\dis
V_{n,\al}\big(
\omega_n(1^t+\ten)
\big)
=\frac1{\binom{N}n Q}\1(
\prod\rl\;
\prod\iwa
\grzi{}^{\ai_r}
\2)
\sum_{z=t+|\ao|-N}^{t+|\ao|-|n|}
\1(
\gzzo
\2)^z
\\ \dis
\times\1[
\varphi_0\ueo{}^{\otimes z}
\otimes \bt_{r=0}^{\ell-1} \;\bt\iwa
\pri{}^{\otimes \al_r\uei} 
\2]_{\fraks_{t}}
\otimes%
\1[
\varphi_0\ueo{}^{\otimes\1( \ao_0-z \2)}
\otimes
\bt_{r=1}^{\ell-1} \pro{}^{\otimes\ao_r}
\2]_{\fraks_{N-t}}.
\ea
\eeq
\end{col}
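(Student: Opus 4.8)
The strategy is to specialize Proposition~\ref{wau} to the element $\pi^u = 1^t$, which corresponds to the parameter $u = (t,0,\dots,0) \in \xln$, and to track how each of the three factors in (\ref{vnawnu}) simplifies. First I would substitute $u_0 = t$ and $u_s = 0$ for $s \geq 1$ into the defining conditions of $\mathcal{W}(\al,u)$. Condition~({\it ii}) for $1 \leq s \leq \ell-1$ reads $0 \geq -\sum_{r=s}^{\ell-1}\sum_{i=I_{r-s-1}+1}^{I_{r-s}}\al_r\uei$, which being a sum of nonnegative terms is automatically forced to be an equality; since each index block $I_{r-s-1}+1 \leq i \leq I_{r-s}$ with $s \geq 1$ sweeps out precisely the indices $2 \leq i \leq I_{r-1}$ as $(r,s)$ vary, this is exactly the hypothesis $(\ref{acon})$ that $\al_r\uei = 0$ for $1 \leq r \leq \ell-1$ and $2 \leq i \leq I_{r-1}$. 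The same condition~({\it ii}) for $s \geq 1$ then also forces $\sum_{r=0}^s w_{r,s} \leq 0$, hence $w_{r,s} = 0$ for all $s \geq 1$; so the only surviving coordinate of $w$ is $w_{0,0}$, which I rename $z$.

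Next I would read off the range of $z$ from the remaining conditions. Condition~({\it i}) with $r=0$ gives $z = w_{0,0} \leq \al_0\ueo$, but we need the sharper bound $z \leq t + |\ao| - |n|$; this comes from combining ({\it i}) across all $r$ with the fact that $\sum_{r=0}^{\ell-1}\sum_{i=I_{r-s-1}+1}^{I_{r-s}}\al_r\uei$-type sums collapse under $(\ref{acon})$ to $|n| - |\ao|$, plus condition~({\it ii}) with $s=0$ which reads $z \leq t - (|n| - |\ao|)$, i.e. $z \leq t + |\ao| - |n|$. Condition~({\it iii}), $|u| + |\ao| - N \leq |w| = z$, gives directly the lower bound $z \geq t + |\ao| - N$. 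Thus $\mathcal{W}(\al,1^t)$ is in bijection with the single integer $z$ ranging over $[\,t+|\ao|-N,\; t+|\ao|-|n|\,]$, matching the summation in (\ref{vnawnt}).

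Then I would substitute into the three product/symmetrization factors of (\ref{vnawnu}). The prefactor $\prod_{0 \leq s \leq r}\prod_{i=I_{r-s-1}+1}^{I_{r-s}} \grsi{}^{\al_r\uei}$: under $(\ref{acon})$ the only nonzero $\al_r\uei$ with $i \geq 2$ occur for $i > I_{r-1}$, and then the block $I_{r-s-1}+1 \leq i \leq I_{r-s}$ containing such $i$ must have $s = r$ (so that $I_{r-s-1} = I_{-1} = 1$... wait, rather $I_{r-s} = I_0$), forcing $s = r$; meanwhile for $i = 1$ the block is $I_{r-s-1}+1 = I_{-1}+1 = 2 > 1$, so $i=1$ never appears — hence only the $s=r$, i.e. $\grri = \grzi$ (with $s=r$ giving $\gamma_{r,r}\uei$, renamed $\grzi$ in the corollary's notation — here I must double-check the index convention, as the corollary writes $\grzi{}^{\ai_r}$ with $r$ running $0$ to $\ell-1$ and $i$ in $I_{r-1}+1$ to $I_r$), terms survive, yielding $\prod_{r=0}^{\ell-1}\prod_{i=I_{r-1}+1}^{I_r}\grzi{}^{\ai_r}$. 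The middle sum over $w$ collapses to the sum over $z$ with $\prod_r \grro{}^{w_{r,r}}$ becoming $(\gzzo)^z$ since $w_{r,r} = 0$ for $r \geq 1$. In the first symmetrization, the outer $\bigotimes_{s=0}^{\ell-1}$ has all $s \geq 1$ factors trivial (since $w_{r,s}=0$ and the only nonzero $\al_r\uei$ with the relevant block index have $r$ arbitrary but the $\fraks_{u_s} = \fraks_0$ symmetrization is over the empty tuple), so only $s=0$ contributes: $[\varphi_0\ueo{}^{\otimes z} \otimes \bigotimes_{r=0}^{\ell-1}\bigotimes_{i=I_{r-1}+1}^{I_r}\pri{}^{\otimes\al_r\uei}]_{\fraks_t}$. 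The second symmetrization factor, with exponents $\al_r\ueo - \sum_{s=r}^{\ell-1}w_{r,s} = \al_r\ueo$ for $r \geq 1$ and $= \ao_0 - z$ for $r=0$, becomes $[\varphi_0\ueo{}^{\otimes(\ao_0-z)} \otimes \bigotimes_{r=1}^{\ell-1}\pro{}^{\otimes\ao_r}]_{\fraks_{N-t}}$. Assembling these gives exactly (\ref{vnawnt}).

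\textbf{Anticipated main obstacle.} The calculations themselves are routine bookkeeping, so the real hazard is the combinatorial indexing — in particular verifying that the index ranges $I_{r-s-1}+1 \leq i \leq I_{r-s}$ in Proposition~\ref{wau} collapse correctly under $u = 1^t$ to the ranges $I_{r-1}+1 \leq i \leq I_r$ appearing in the corollary, and that the convention $I_{r-s-1} = I_{-1} = 1$ at $s = r$ is applied consistently (and that the $i=1$, i.e. $\pro$, component is handled separately via the $\gamma_{s,s}\ueo$ term rather than being swept into the generic product). A secondary subtlety is checking that the lower bound $z \geq t + |\ao| - N$ genuinely follows from condition~({\it iii}) once one knows $|w| = z$, and that no further constraint on $z$ is hidden in condition~({\it ii}) for $s \geq 1$ beyond what forces $(\ref{acon})$. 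I would write out the $s=0$ case of condition~({\it ii}) explicitly to pin down the upper bound $z \leq t + |\ao| - |n|$, since this is where the interplay between the $w$-constraints and the $\al$-constraints is least transparent.
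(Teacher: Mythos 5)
Your overall strategy --- specializing Proposition \ref{wau} to $u=(t,0,\dots,0)$ and tracking how conditions ({\it i})--({\it iii}) and the three factors of (\ref{vnawnu}) collapse --- is exactly how the paper obtains the corollary (it is stated as an immediate reduction of the proposition, with no separate proof), and your treatment of the constraints on $w$, of the identification $z=w_{0,0}$ with the stated range, and of the two symmetrization factors is correct.

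There is, however, one concrete error in your simplification of the prefactor $\prod_{0\le s\le r\le\ell-1}\prod_{i=I_{r-s-1}+1}^{I_{r-s}}\grsi{}^{\al_r\uei}$. You assert that the block containing an index $i>I_{r-1}$ must have $s=r$, and accordingly describe the surviving coefficients as $\grri$ ``renamed'' $\grzi$. This is backwards: for fixed $r\ge1$ the block $I_{r-s-1}+1\le i\le I_{r-s}$ equals $\{I_{r-1}+1,\dots,I_r\}$ precisely when $s=0$, whereas the $s=r$ block is $\{2,\dots,I_0\}\subset\{2,\dots,I_{r-1}\}$ and is wiped out by (\ref{acon}). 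So the surviving factors are genuinely $\gamma_{r,0}\uei$, not a relabelling of $\gamma_{r,r}\uei$; the two have different moduli by (\ref{gam}) ($|\grzi|=1/(\sqrt{q^{r-1}}(q-1))$ versus $|\grri|=\sqrt{q}/(q-1)$), and the distinction matters for the later substitution of $|\grzi|$ in Section 3.2. Your final displayed product $\prod\rl\prod\iwa\grzi{}^{\ai_r}$ is the correct one, but the justification as written would instead yield $\prod\rl\prod\iwa\grri{}^{\ai_r}$. A secondary, harmless point: as you note, condition ({\it i}) also imposes $z\le\ao_0$, which the summation range $t+|\ao|-N\le z\le t+|\ao|-|n|$ does not subsume; the extra terms are vacuous (negative tensor powers), and the paper's statement adopts the same convention, so nothing needs changing there.
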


We again restrict ourselves to deal with
\beq
c=(c_k)\kn\in(\ox)^N\subset G
\eeq
as the acting element $g\in G$ in the procedure (iii).
When $c$ acts on the function $V_{n,\al}\big(
\omega_n(1^t+\ten)
\big)$ in the last corollary,
if the $k$-th tensor element is $\pri$,
it is multiplied by $\xi\uei(c_k)$ as in (\ref{ri}).
Each term $\tf$ in (\ref{vnawnt}) is thus multiplied by
\beq
\prod\rl\;\prod\iwa\;\prod_{k\in\ari}\xi\uei(c_k),\quad\text{where}\;\;
\ari:=\1\{
k\in\{1,\cdots, t\}
\;\Big|\;
f_k=\pri
\2\}.
\eeq
Now let us calculate an inner product in the procedure (iii):

\begin{pp}
For $\aia$ satisfying $(\ref{acon}),\; 0\leq t\leq \ell-1,\;u=(u_s)_{s=0}^{\ell-1}\in \xln$ satisfying $t\leq u_0$ and $c\in (\ox)^N$, we have
\beq
  \label{the}
\ba{r}\dis
\bigg\la
c\cdot V_{n,\al}\big(
\omega_n(1^t+\ten)
\big)
,\;
V_{n,\al}\big(
\omega_n(\pi^u+\ten)
\big)
\bigg\ra
=
\frac1{\binom{N}n^2 Q^2}
\prod\rl\;
\prod\iwa
\1|\grzi\2|^{2\ai_r}
\\[.3in] \dis
\times \1(
\sum_{(\ari)_{r,i}}\; \prod\rl\; \prod\iwa\;\prod_{k\in\ari}
\xi\uei(c_k)
\2)
\\[.3in] \dis
\times
\prod_{s=1}^{\ell-1}
\sum_{k\geq0}
\binom{u_s}k
\binom{N-u|^s-\ao|_{s+1}}{\ao_s-k}
\gsso{}^k
\\[.3in] \dis
\times
\sum_{z\geq 0}
\sum_{k\geq 0}
\binom{t-|n|+|\ao|}{z}
\binom{u_0-t}{k}
\binom{N-u_0-\ao|_1}{\ao_0-z-k}
\gzzo{}^{2z+k},
\ea
\eeq
where the sum $\dis \sum_{(\ari)_{r,i}}$ in the second line runs over all disjoint subsets 
$\dis \bigsqcup\rl\;\bigsqcup\iwa \ari\subset \{1,\cdots, t \}$ such that $|\ari|=\ai_r$.
\end{pp}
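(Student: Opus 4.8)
The strategy is to substitute the explicit formulas from Corollary~3.2.1 for both arguments of the inner product, interchange the inner product with the (finite) sums over the parameter sets, and then evaluate the resulting pairing of symmetrized tensors using the orthonormality of the system $\{1\}\cup\{\pri\}$ from (\ref{ob1}). The $c$-action only rescales each tensor factor $\pri$ by $\xi\uei(c_k)$ (by (\ref{ri})), so it can be pulled out front as the character product indexed by the family $(\ari)_{r,i}$; this is where the second line of (\ref{the}) comes from, and the disjointness of the $\ari$ is forced by the fact that in any single term of (\ref{vnawnt}) the indices $k\in\{1,\dots,t\}$ carry pairwise distinct tensor slots.

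The first concrete step is to write $c\cdot V_{n,\al}(\omega_n(1^t+\ten))$ using (\ref{vnawnt}): the scalar prefactor $\frac1{\binom Nn Q}\prod_r\prod_i \grzi{}^{\ai_r}$ is unchanged, each symmetrized block $[\cdots]_{\fraks_t}$ becomes a sum over the coset representatives determining the slots $\ari$, and each such representative contributes $\prod_r\prod_i\prod_{k\in\ari}\xi\uei(c_k)$. For the second factor I would take the special case $u$ with $t\le u_0$ of Proposition~\ref{wau}; under the assumption (\ref{acon}) the set $\mathcal W(\al,u)$ collapses so that only $w_{r,s}$ with $r=0$ survive (the same collapse observed just before Corollary~3.2.1), leaving the single block over $\fraks_{u_s}$ for each $s\ge1$ containing only copies of $\varphi_s\ueo$ and the $\pri$'s with $r>s$, the block over $\fraks_{u_0}$ containing $\varphi_0\ueo$'s and the various $\pri$'s, and the trailing block over $\fraks_{N-|u|}$ of $\pro$'s.

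Next I would compute the pairing block by block. Because distinct $W_r$ are mutually orthogonal and within $W_r$ the $\pri$ are orthonormal, a tensor $\tf$ from (\ref{vnawnt}) pairs nontrivially with a tensor $\otimes g_k$ from (\ref{vnawnu}) only when, slot by slot (after applying the symmetrizations), the two functions coincide; each matching slot contributes $1$ if the common factor is some $\pri$ and $\|\pro\|^2$-type constants are already absorbed in $Q$. The combinatorics of counting how many ways a symmetrized tensor on the left (slots partitioned by the $\ari$ and by a count $z$ of $\varphi_0\ueo$'s) can be matched against a symmetrized tensor on the right (with its own counts $k$ of $\varphi_s\ueo$ in each block and $z,k$ in the $\fraks_{u_0}$ block) produces exactly the multinomial coefficients $\binom{u_s}{k}$, $\binom{N-u|^s-\ao|_{s+1}}{\ao_s-k}$, $\binom{t-|n|+|\ao|}{z}$, $\binom{u_0-t}{k}$, $\binom{N-u_0-\ao|_1}{\ao_0-z-k}$ in the statement; the powers $\gsso{}^k$ and $\gzzo{}^{2z+k}$ arise because each $\varphi_s\ueo$ in a block over $\fraks_{u_s}$ ($s\ge1$) carries one factor $\gsso$ from the right-hand expansion (with none from the left, as those slots are $\pro$'s there), while each $\varphi_0\ueo$ that is matched on both sides carries $\gzzo$ from each side (total $\gzzo^{2}$ for the $z$ of them) and each $\varphi_0\ueo$ matched only on the right-hand $\fraks_{u_0}$ block against a $\pro$ on the left (there are $k$ of them) carries one factor $\gzzo$. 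The absolute-value squared $|\grzi|^{2\ai_r}$ in the prefactor is simply $\grzi{}^{\ai_r}$ from the left times $\overline{\grzi}{}^{\ai_r}$ from the conjugated right-hand prefactor.

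\emph{Main obstacle.} The delicate point is the slot-matching count: one must be careful that, after both symmetrizations $[\,\cdot\,]_{\fraks_t}$ (on the left) and the various $[\,\cdot\,]_{\fraks_{u_s}}$, $[\,\cdot\,]_{\fraks_{N-|u|}}$ (on the right), each unordered pair of matched monomials is counted with the correct multiplicity and no overcounting occurs — equivalently, that the stabilizer subgroups in the definition (\ref{sym}) are accounted for so that the resulting coefficients are genuine multinomial coefficients rather than ratios of factorials with stray symmetry factors. Checking that the surviving ranges of $z$ and of the summation index $k$ in each block are exactly $z,k\ge0$ (all other constraints being automatically implied by the binomial coefficients vanishing outside their natural range) is the bookkeeping that makes the final formula clean, and it is the step I expect to require the most care.
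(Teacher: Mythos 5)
Your proposal follows the same route as the paper's own proof: substitute the two expansions (from Proposition \ref{wau} and its corollary for $1^t$), pull the characters $\xi\uei(c_k)$ out front via (\ref{ri}), and evaluate the pairing by the orthonormality of the system (\ref{ob1}) together with a slot-by-slot placement count. Your identification of the origin of each line of (\ref{the}) — the character sum over the disjoint families $(\ari)$, the factor $\gsso{}^k$ from the $\varphi_s\ueo$'s that land in the $u_s$-block, and $\gzzo{}^{2z+k}$ from the two sides of the $u_0$-block — agrees with the paper's argument, which is itself left at a comparable level of detail.

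One intermediate claim is wrong, although your subsequent bookkeeping silently corrects it. You assert that for the second factor the assumption (\ref{acon}) collapses $\mathcal{W}(\al,u)$ so that only the $w_{r,s}$ with $r=0$ survive, "the same collapse observed just before Corollary 3.2.1." That collapse comes from condition (\emph{ii}) of Proposition \ref{wau} when $u=(t,0,\cdots,0)$, not from (\ref{acon}); for a general $u$ the parameters $w_{s,s}$ (and $w_{r,s}$ with $0<r<s$) are genuinely nonzero, and it is precisely $w_{s,s}=k$ that produces $\gsso{}^k$ in the third line of (\ref{the}) — taking your claim literally would reduce that line to its $k=0$ term. What (\ref{acon}) actually kills in the $\fraks_{u_s}$-block for $s\geq1$ is every $\pri$ with $i\geq2$, since there $I_{r-s-1}+1\leq i\leq I_{r-s}\leq I_{r-1}$; so that block consists of $\pro$'s with $r\leq s$ (and $1$'s) only, not of "copies of $\varphi_s\ueo$ and the $\pri$'s with $r>s$" as you wrote. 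Since the paragraph where you do the counting treats exactly the $\varphi_s\ueo$'s in the $u_s$-block with their $\gsso$ coefficients, the final formula comes out right; just replace the appeal to a collapse of $\mathcal{W}(\al,u)$ by the correct description of which tensor factors survive on the $\pi^u$ side.
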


\begin{proof}
The first line of the statement (\ref{the}) is obvious from (\ref{vnawnu}) and (\ref{vnawnt}).
We choose one term in the symmetrization $[\cdot]_{\fraks_t} \otimes [\cdot]_{\fraks_{N-t}}$ in (\ref{vnawnt}), that is, determine an order of the tensor as follows:
Firstly for each $i\geq 2$, we determine places where $\varphi_r^{(i)}$'s stay.
It yields the second line of (\ref{the}).
Secondly, in the decreasing order $s=\ell-1,\ell-2, \cdots, 1$, we choose places for $\varphi_s^{(1)}$'s in each terms of $[\cdot]_{\fraks_{N-t}}$ in (\ref{vnawnt}).
Among $\alpha_s^{(1)}$ number of them, let $k$ number be choosen from $\{ u|^{s-1}+1,\cdots, u|^{s} \}$.
Then the remaining $\alpha_s^{(1)}-k$ number must be taken from $\{u|^{s}+1, \cdots, N \} - \{ \text{indexes already taken for $\varphi_r^{(1)}$'s,\, $r>s$} \}$.
Since $\1(\gamma_{s,s}^{(1)}\2)^k$ is multiplied by (\ref{vnawnu}),
the third line of (\ref{the}) is gotten.
Finally, we choose places for $\varphi_0^{(1)}$'s.
We let $k$ number of them choosen from $\{ t+1,\cdots,N \}\cap \{1,\cdots,u_0\}$.
Then we conclude the last line of (\ref{the}).
Details are left to the reader.
\end{proof}

By substituting the values of $|\gamma_{r,0}^{(i)}|$ in Proposition \ref{maru} in the first line of (\ref{the}), we have
\beq
\prod\rl\;\prod\iwa\1| \grzi \2|^{2\ai_r}
=\frac1{q^{\sum\rl(r-1)\1(n_r-\ao_r\2)}
(q-1)^{2\1(|n|-|\ao|\2)}}.
\eeq

By the definition (\ref{kraw}) of the Krawtchouk polynomials, we notice that the third and fourth lines of (\ref{the}) are in the form of that polynomials.
Also recall that $\gsso=-\frac1{q-1}$ for $0\leq s\leq \ell-1$ (Proposition \ref{maru}).
Together with the definition (\ref{lkraw}) of the $\ell$-variate Krawtchouk polynomials, we have
\beq
\ba{l}
\text{(the 3rd and 4th lines of (\ref{the}))}
\\[0.2in] \dis
=\prod_{s=1}^{\ell-1}
\binom{N-u|^{s-1}-\ao|_{s+1}}{\ao_s}
\;K_{\ao_s}\1( u_s; \!\!\ba{c}\frac{q-1}q\ea\!\!;\, N-u|^{s-1}-\ao|_{s+1} \2)
\\[0.2in] \dis
\times \sum_{z\geq0}
\1(\!\!\!
\ba{c}-\frac1{q-1}\ea
\!\!\!\2)^{2z}
\binom{t-|n|+|\ao|}{z}
%
\binom{N-t-\ao|_1}{\ao_0-z}
K_{\ao_0-z}\1( u_0-t; \!\!\ba{c}\frac{q-1}q\ea\!\!; N-t-\ao|_1 \2)
\\[0.4in] \dis
=\sum_{z\geq0}\frac1{(q-1)^{2z}}
\binom{t-|n|+|\ao|}{z}
\binom{N-t}{\ao_0-z,\, \ao_1,\cdots,\ao_{\ell-1}}
\\[0.2in] \dis
\quad\times\; K\uel_{\1( \ao_0-z,\, \ao_1,\cdots,\ao_{\ell-1} \2)}
\1( u_0-t,\, u_1,\cdots,u_{\ell-1}; \!\!\ba{c}\frac{q-1}q\ea\!\!; N-t \2).
\ea
\eeq

\vs{0.1in}
\nt
Now we take the sum of (\ref{the}) over $\aia$ satisfying (\ref{acon}).
By splitting the sum as
\beq
   \label{swa}
\sum_{\aia}=\sum_{\{\ao_r\}\rl}\;\sum_{\{ \ai_r \}_{r,i}},
\eeq
then the second sum of (\ref{swa}) and the second line of (\ref{the}) yield $\ell$-variate Krawtchouk polynomials again;
\beq
\ba{l} \dis
\hs{-00.5in}
\sum_{\{ \ai_r \}_{r,i}}\1(
\sum_{(\ari)_{r,i}}\; \prod\rl\; \prod\iwa\;\prod_{k\in\ari}
\xi\uei(c_k)
\2)
\\[.4in] \dis
=\binom{t}{n-\ao}
q^{\sum_{r=1}^{\ell-1}(r-1)\1(n_r-\ao_r\2)}
(q-1)^{2\1(n|_1-\ao|_1\2)}
(q-2)^{n_0-\ao_0}
\\[.2in] \dis
\quad \times\;
K\uel_{n-\ao}
\1(y;
\ba{c}
\frac{q-2}{q-1}, \frac{q-1}q,\cdots,\frac{q-1}q
\ea
;\;t
\2),
\ea
\eeq
where $y=(y_r)\rl$ is the set of numbers that reflects the group action, defined by
\beq
y_r= \sharp\1\{
k\in\{1,\cdots, t\} \;\Big|\;
c_k\in \ox_{r-1} - \ox_r
\2\}
\quad(0\leq r\leq \ell-1).
\eeq

On the other hand, by counting numbers of components that have each value of the valuation $v$,
we know
\beq
c\cdot 1^t - \pi^u \in \calo\big(
u_0-t+y_0,\, u_1+y_1, \cdots, u_{\ell-1}+y_{\ell-1}
\big).
\eeq
So in our situation, the left hand side of (\ref{comb}) is
\beq
K_n\uel\1(
u_0-t+y_0,\, u_1+y_1, \cdots, u_{\ell-1}+y_{\ell-1};
\ba{r} \frac{q-1}q \ea
;\;N
\2).
\eeq

All the observations above induce the main theorem of the paper.
In the theorem,
we use the following notation:

\vs{0.1in}
\nt\underline{Notation.}\; For a sequence $x=(x_r)\rl$ and a number $a$,
we denote by $x+ae_0$ the sequence added by $a$ in the only 0th element;
\beq
x+ae_0=\1(x_0+a,\, x_1,\cdots,x_{\ell-1}\2).
\eeq
And in the theorem, we replace $\ao=(\ao_r)\rl$ by $\al=(\al_r)\rl$ and other letters are used in the same way as in the discussion so far.

\np
\begin{thrm}
For $n\in\xln,\; 0\leq t\leq N,\; u\in\xln$ such that $t\leq u_0$, and $y\in X(\ell, t)$, we have
$$
\ba{l}\dis
K_n\uel\1(
u+y-te_0\;;
\ba{r} \frac{q-1}q \ea
;\;N
\2)
\\[0.2in] \dis
=\frac1{\binom{N}n}
\sum_{\al\in\xln}\;\sum_{z\geq0}
\1(\!\! \ba{r} \frac{q(q-2)}{(q-1)^2} \ea \!\!\2)^{n_0-\al_0}
\frac1{(q-1)^{2z}}
\binom{t}{n-\al, z} \binom{N-t}{\al-ze_0}
\\[0.3in] \dis
\quad\times\; K\uel_{\al-ze_{{}_{0}}}
\1(
u-te_0;
\ba{r} \frac{q-1}q \ea
;\;N-t
\2)
\; K\uel_{n-\al}
\1(
y;
\ba{c}
\frac{q-2}{q-1}, \frac{q-1}q,\cdots,\frac{q-1}q
\ea
;\;t
\2).
\ea
$$
\end{thrm}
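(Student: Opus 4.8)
The plan is to evaluate both sides of the master identity (\ref{comb}) after the specialization $\calp=\calp(n)$, $a=1^t$, $b=\pi^u$, $g=c\in(\ox)^N\subseteq G$ (for a $c$ realizing a prescribed $y$); since every ingredient is already assembled in this subsection, what is left is an assembly together with a careful accounting of powers of $q$. I would treat the left-hand side first: the valuation of $c\cdot 1^t-\pi^u$, computed coordinatewise, puts $y_r=\sharp\{k\le t\mid c_k\in\ox_{r-1}-\ox_r\}$ coordinates at valuation $r$ coming from the entries $c_k-1$ ($1\le k\le t$), another $u_0-t$ coordinates at valuation $0$ from the entries $-1$ (this is where the hypothesis $t\le u_0$ enters), $u_s$ coordinates at valuation $s$ from the entries $-\pi^s$, and the rest at valuation $\infty$; hence $c\cdot 1^t-\pi^u\in\calo(u+y-te_0)$, and (\ref{w-kyu}) identifies $\omega_n\1(c\cdot 1^t-\pi^u\2)$ with $K_n\uel\1(u+y-te_0;\tfrac{q-1}q;N\2)$, the left-hand side of the theorem.

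For the right-hand side I would use $\dim V_n=\binom Nn q^{\sum\rl rn_r}(q-1)^{|n|}$ and expand the inner product along the orthogonal, $G$-stable decomposition (\ref{vnd}), writing $\la c\cdot\omega_n(1^t+\ten),\omega_n(\pi^u+\ten)\ra=\sum_{\aia}\la c\cdot V_{n,\al}(\omega_n(1^t+\ten)),V_{n,\al}(\omega_n(\pi^u+\ten))\ra$ (valid because $c$ preserves each $V_{n,\al}$). Since $1^t=\pi^{(t,0,\cdots,0)}$, condition (ii) of Proposition \ref{wau} makes $\mathcal{W}(\al,(t,0,\cdots,0))$ empty unless $\al$ satisfies (\ref{acon}), so only such $\al$ contribute; for them (\ref{vnawnt}) supplies $V_{n,\al}(\omega_n(1^t+\ten))$ and (\ref{the}) evaluates the summand as the displayed four-line product. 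Inserting $|\grzi|=\tfrac1{\sqrt{q^{r-1}}(q-1)}$ and $\gsso=\gzzo=-\tfrac1{q-1}$ from Proposition \ref{maru}, and reading the third and fourth lines of (\ref{the}) through the defining hypergeometric sums (\ref{kraw})--(\ref{lkraw}), turns those lines into $\sum_{z\ge0}\tfrac1{(q-1)^{2z}}\binom{t-|n|+|\ao|}z\binom{N-t}{\ao-ze_0}K\uel_{\ao-ze_0}\1(u-te_0;\tfrac{q-1}q;N-t\2)$, leaving the binomial $\binom{t-|n|+|\ao|}z$ to merge later.

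It remains to perform the outer sum over $\aia$ obeying (\ref{acon}), split as in (\ref{swa}) into a sum over $(\ao_r)\rl$ and a sum over the parts $\ai_r$ with $I_{r-1}+1\le i\le I_r$ together with the subset configurations $\ari$ in the second line of (\ref{the}). The substantive point---and the step I expect to be the main obstacle---is that this last sum collapses to $\binom t{n-\ao}\,(q-2)^{n_0-\ao_0}\,q^{\sum_{r=1}^{\ell-1}(r-1)(n_r-\ao_r)}(q-1)^{2(n|_1-\ao|_1)}\,K\uel_{n-\ao}\1(y;\tfrac{q-2}{q-1},\tfrac{q-1}q,\cdots,\tfrac{q-1}q;t\2)$; this is precisely the character identity displayed just before the theorem, and it rests on the $\xi\uei$ with $I_{r-1}+1\le i\le I_r$ being exactly the characters of $\ox/\ox_r$ that are nontrivial on $\ox_{r-1}/\ox_r$, so that sorting the $c_k$ by the valuation $r$ with $c_k\in\ox_{r-1}-\ox_r$---that is, by $y$---produces both the multinomial structure of a Krawtchouk polynomial and the factor $q-2$ (arising at $r=0$, where $\ox/\ox_0$ is the multiplicative group of the residue field). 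After that it is bookkeeping: $\dim V_n/\1(\binom Nn^2 Q^2\2)=1/\binom Nn$ since $Q^2=q^{\sum\rl rn_r}(q-1)^{|n|}$; the $q$-, $(q-1)$- and $(q-2)$-powers from the first line of (\ref{the}) and from the collapse combine into $\1(q(q-2)/(q-1)^2\2)^{n_0-\ao_0}$; the two binomials merge as $\binom t{n-\ao}\binom{t-|n|+|\ao|}z=\binom t{n-\ao,z}$; and relabelling $\ao$ as $\al$ yields the stated identity. Since $1^t$ involves no power of $\pi$, nothing in this chain actually needs $t\le\ell-1$, so the hypothesis may be taken as $t\le N$.
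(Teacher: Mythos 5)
Your proposal follows essentially the same route as the paper: specializing the convolution identity (\ref{comb}) to $a=1^t$, $b=\pi^u$, $g=c$, computing the left side via the orbit of $c\cdot 1^t-\pi^u$, and expanding the right side along the orthogonal $G$-stable decomposition (\ref{vnd}) using Proposition \ref{wau}, its corollary, the inner-product formula (\ref{the}), and the final character-sum collapse into $K\uel_{n-\al}(y;\cdots;t)$ — exactly the chain the paper assembles in Section 3.2. Your closing remark that only $t\le N$ (not $t\le\ell-1$) is needed is consistent with the theorem as stated.
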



\end{document}